\newcommand {\bel}[1]{\begin{align*}}
\newcommand {\eel}[1]{\end{align*}}
\newcommand {\bea}{\begin{eqnarray}}
\newcommand {\eea}{\end{eqnarray}}
\newcommand{\R}{\mathbb{R}}
\newcommand{\Z}{\mathbb{Z}}
\newcommand{\mb}[1]{\mbox{\boldmath $#1$}}
\newcommand{\ignore}[1]{\relax}
\newtheorem{theorem}{Theorem}
\newtheorem{lemma}{Lemma}
\newtheorem{prop}{Proposition}
\newtheorem{Defi}{Definition}
\definecolor{Red}{rgb}{1,0,0}
\definecolor{Blue}{rgb}{0,0,1}
\definecolor{Olive}{rgb}{0.41,0.55,0.13}
\definecolor{Green}{rgb}{0,1,0}
\definecolor{MGreen}{rgb}{0,0.8,0}
\definecolor{DGreen}{rgb}{0,0.55,0}
\definecolor{Yellow}{rgb}{1,1,0}
\definecolor{Cyan}{rgb}{0,1,1}
\definecolor{Magenta}{rgb}{1,0,1}
\definecolor{Orange}{rgb}{1,.5,0}
\definecolor{Violet}{rgb}{.5,0,.5}
\definecolor{Purple}{rgb}{.75,0,.25}
\definecolor{Brown}{rgb}{.75,.5,.25}
\definecolor{Grey}{rgb}{.5,.5,.5}
\definecolor{Pink}{rgb}{1,0,1}
\definecolor{DBrown}{rgb}{.5,.34,.16}
\definecolor{Black}{rgb}{0,0,0}
\title{Stability of Skorokhod problem is undecidable}
\author{
 {\sf David Gamarnik }
  \thanks{Operations Research Center and Sloan School of Management, MIT, Cambridge, MA,  02139, e-mail: {\tt
gamarnik@mit.edu}.   This work was supported by NSF grant CMMI-0726733}
\and
{\sf Dmitriy Katz} \thanks{T.J. Watson Research Center, IBM, Yorktown Heights, NY,  10598, e-mail: {\tt
dimdim@mit.edu}}
}
\date{\today}
\begin{document}

\maketitle

\begin{abstract}
Skorokhod problem arises in studying Reflected Brownian Motion (RBM) on an non-negative orthant,
specifically in the context of queueing networks in the heavy traffic regime.
One of the key problems is identifying conditions for stability of a Skorokhod problem, defined as the property that trajectories
are attracted to the origin.  The stability conditions are known in dimension up to three, but not for general dimensions.

In this paper we explain the fundamental difficulties encountered in trying to establish stability conditions
for general dimensions. We prove that stability of Skorokhod problem is an undecidable property when the starting state
is a part of the input. Namely, there does not exist an algorithm
(a constructive procedure) for identifying stable Skorokhod problem in general dimensions.
\end{abstract}

\noindent{Keywords: Reflected Brownian Motion, Fluid Model, Computability.}



\section{Introduction}\label{section:introduction}
The Skorokhod problem was introduced by A.~Skorokhod to model  stochastic processes, typically  diffusion processes, which are constrained to take
values in a particular subset $\mathcal{X}$ of the Euclidian space $\R^d$. The constraints take form of augmenting the underlying unconstrained process (diffusion)
with an additional process (pushing) which is "active" only when the process is on the boundary of $\mathcal{X}$.
When the underlying process is a Brownian motion, the corresponding Skorokhod problem is often called Semi-Martingale Reflected Brownian Motion (SRBM).
Solving a Skorokhod
problem roughly refers to the problem of identifying the augmentation such that the modified process is well defined.
A particularly important application of the Skorokhod problem is in the theory of queueing networks, where the state space
$\mathcal{X}$ is usually a non-negative orthant $\R^d_+$,
the constrained process corresponds
to the vector of queues appropriately scaled, and the pushing process corresponds to processes
describing cumulative idling of servers~\cite{ChenYaoBook},\cite{harrison},\cite{WilliamsSurvey95}. In
this case one can naturally construct a certain reflection matrix $R$ such that the idling process $Y(t)$ impacts the vector of queues via the matrix $R$.
(See the next section for formal definitions of a Skorokhod problem, SRBM and other related notions.)

A key question arising in the context of Skorokhod problem is stability - the property that the constraned process is positive recurrent.
Among other things, the importance of stability stems from the fact that it implies the existence of an invariant probability measure
for the underlying stochastic process. In the context of queueing theory, one can use the invariant measure to obtain important insights into key
performance measures of the underlying queueing network, such as steady state queue lengths and waiting times distributions~\cite{GamarnikZeevi},\cite{BudhirajaLee}.
Unfortunately, stability of a Skorokhod problem turned out to be a difficult property to analyze. In the context of SRBM the stability becomes a property which
depends on the parameters of  underlying Brownian motion, namely the drift and the covariance matrix, and the reflection matrix $R$.
An important advance was achieved by Dupuis and Williams~\cite{dupuis_williams} who connected stability of SRBM with the stability of the associated
so-called fluid model. One can think of this fluid model as an SRBM with a deterministic (zero covariance matrix) Brownian motion. They showed that
an SRBM is stable if every path in the corresponding fluid model is attracted to the origin. There are two issues, however, associated with this important
result. First, as it was shown recently in Bramson~\cite{BramsonSRBMinstabilityFluid}, the converse of this result is not true when $d=6$.
See Theorem~\ref{theorem:counter-to-Skorokhod} below for the precise statement.
Second, it still
leaves open the question of identifying stability conditions for the corresponding fluid model. Some sufficient conditions are known for restricted
classes of the reflection matrix $R$~\cite{WilliamsSurvey95}. Additionally, a full characterization of stable fluid models
as well as the underlying SRBM for a 3-dimensional Skorokhod
problem was obtained in a series of papers: El Kharroubi, Ben Tahar and Yaacoubi~\cite{Kharroubi2000},
El Kharroubi, Ben Tahar and Yaacoubi~\cite{Kharroubi2002}, Bramson, Dai and Harrison~\cite{BramsonDaiHarrisonSRBM}.
However, as of now, the characterization of stable SRBM or stable fluid models of SRBM in general dimensions is not known.

In this work we explain the fundamental difficulties encountered in trying to establish stability conditions for a fluid model of an SRBM
for general dimensions. We prove that stability of a fluid model of an SRBM is an undecidable property when the starting state of a fluid path
is a part of the input (see the next section for formal definitions and the precise statement). Namely, there does not exist an algorithm
(a constructive procedure) which determines whether a reflection of a \emph{given} linear path is attracted to the origin.
We further conjecture that stability of a fluid model of an SRBM
remains undecidable when the starting state is not part of the input. Namely, when stability is defined
as the property that \emph{all} fluid paths are attracted to the origin. This is stability definition used in~\cite{dupuis_williams},
and in order to distinguish it from stability of a given fluid path, we call it \emph{global stability} of a fluid model in this paper.
Likewise, we conjecture that stability of an SRBM is an undecidable property in general dimension $d$.
Our result continues a stream of earlier works~\cite{gamarnik_decidability},\cite{gamarnik_decidability_LD},\cite{GamarnikRogozhnikovDecidability},
where stability of constrained random walks in $\Z_+^d$ and multiclass queueing networks operating
under certain classes of scheduling policies was shown to be undecidable.

The concept of undecidability was introduced in the classical works of Alan Turing in 1930's and it is one of the principal
tools for establishing limitations of certain decision problems. A good reference of decidability (computability) is~\cite{sipser}.
Thanks to the work of Turing we know that certain
decision problems do not admit an effective solution in a sense of existence of an algorithms to solve them.
We should note that undecidability property is not related to the speed of algorithms, or specifically, whether a polynomial
time algorithm exists for a given problem. If a problem is shown to be undecidable, it means it does not admit \emph{any}
algorithm to solve it, no matter how slow the running time is allowed to be.
There are many examples of undecidable problems, including Turing Halting Problem, Post Correspondence Problem, Conways' game of life
and many others. A recent article~\cite{Goodman-Strauss} gives a nice overview of known undecidable problems in mathematics,
as well as the connection of this concept with G{\"o}del's Incompleteness Theorem.
We should note, however, that very few undecidable problems are known in the context of stochastic processes, probabilistic
cellular automata being the only notable exception perhaps~\cite{Goodman-Strauss}. Thus we believe that this article
along with~\cite{gamarnik_decidability} and \cite{GamarnikKatz} contributes to awareness of this important notion
in the community of researchers working in the area of stochastic processes.

Typically one establishes undecidability of a given problem by taking a problem which is already known to be undecidable,
and establishing a reduction from this problem to the underlying problem of interest. This is known as the \emph{reduction method}.
Recently several problems were proven to be undecidable in the area of control theory~\cite{blondel},\cite{blondel_survey},
\cite{BlondelTsitsiklisMatrixPair}. In particular the work of Blondel et al. \cite{blondel} used a device
known as \emph{counter machine} or \emph{counter automata} as a reduction tool.
In the present paper as in \cite{blondel} as well as in \cite{gamarnik_decidability}, and~\cite{GamarnikKatz}
our proof technique is also based on a reduction from a Counter Machine model.
In particular we state a known undecidable problem, namely the Halting Problem for a Counter Machine~\cite{hopcroft},
and then build a reduction from the Halting Problem of a Counter Machine into the stability problem of a fluid model
of a Skorokhod problem. If there was an algorithm to determine stability of a fluid model, it would imply
the existence of an algorithm for solving the Halting Problem and this would be a contradiction.

The remainder of the paper is organized as follows. In the next section we define the Skorokhod problem, SRBM, its fluid model and stability.
Our main result, Theorem~\ref{theorem:mainresult} is also stated in this section. Section~\ref{section:CounterAut} presents
Counter Machine and the Halting Problem which is used as a basis of our reduction. The proof of the main result is in
Sections~\ref{section:Reduction},\,\ref{section:dynamics} and \ref{section:wrapup}. Specifically, a reduction from a Counter Machine
to a Skorokhod problem is constructed in Section~\ref{section:Reduction}. In Section~\ref{section:dynamics} we show that the constructed
Skorokhod problem has dynamics which mimics the one of the underlying Counter Machine. Finally, in Section~\ref{section:wrapup} we
construct a  modification of the Skorokhod problem to connect the halting property of the underlying Counter Machine with stability of the Skorokhod problem.

We close this section with some notational conventions. $C([0,\infty),\R^d)$ denotes the space of continuous $\R^d$ valued functions defined
on $[0,\infty)$. $\mb{1}\{\cdot\}$ denotes the indicator function. $\delta(\cdot)$ denotes the Kronecker function. Namely, $\delta(x)=1$ for $x=0$
and $\delta(x)=0$ for any other real value $x$. All vectors are assumed to be column vectors. We use $A^T$ to denote a transposition of a matrix $A$.

\section{Skorokhod problem and stability}
Given a $d$-dimensional square matrix $R$, the Skorokhod problem is the problem of constructing a map
$\Psi: C([0,\infty),\R^d)\rightarrow C^2([0,\infty),\R^d_+)$, such that for every $x\in C([0,\infty),\R^d)$
the image $(y,z)=\Psi(x)$ satisfies the following properties
\begin{align}
z(t)&=x(t)+Ry(t), ~~t\in \R_+ \label{eq:z=x+Ry}\\
y(0)&=0, ~y_j(t)~\text{is non-decreasing for all}~j=1,\ldots,d \label{eq:yincreasing}\\
\int_0^\infty z_j(s)dy_j(s)&=0,~~j=1,\ldots,d \label{eq:non-idling}.
\end{align}
where the integral in (\ref{eq:non-idling}) is in Stieltjes  sense, which is well defined since $y_j$ are non-decreasing.
Intuitively, the meaning of the constraint (\ref{eq:non-idling}) is that the process $y_j$ can increase only at times when
$z_j=0$. We say that $y_j$ is \emph{active} in the time interval $(s_1,s_2)$ if $y_j$ is strictly increasing in this interval.
We also say that $y_j$ is active at the unit rate if it increases at the unit rate. Namely $y_j(s)-y_j(s_1)=s-s_1$ for all $s\in (s_1,s_2)$.
We say that $y_j$ is passive over $(s_1,s_2)$ if $y_j(s_2)=y_j(s_1)$.
The specific Skorokhod problem we will construct in this paper will have many variables active at the unit rate over various time intervals.

The existence of such a  $\Psi$ is completely and nicely characterized by the matrix property known as completely-$\mathcal{S}$ property.
A matrix $R$ is defined to be $\mathcal{S}$-matrix if there exists a $d$-vector $w\ge 0$, such that $Rw>0$ in a coordinate-wise sense.
A matrix $R$ is completely-$\mathcal{S}$ if every principal submatrix of $R$ is a $\mathcal{S}$-matrix. (A principal submatrix is the one where
the indices of rows and columns are the same). In particular every completely-$\mathcal{S}$ has positive diagonal elements. The following result
is established in a series of papers~\cite{BernardKharroubi89},\cite{BernardKharroubi91},\cite{TaylorWilliams}.

\begin{theorem}
A Skorokhod map $\Psi$ exists if and only if the matrix $R$ is completely-$\mathcal{S}$.
\end{theorem}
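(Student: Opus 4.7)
The plan is to handle the two implications separately: necessity via a direct boundary-behavior argument, and sufficiency by constructing $\Psi$ from a local $\mathcal{S}$-selection scheme and extending to all continuous inputs.

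For necessity, fix an index set $I\subseteq\{1,\ldots,d\}$; I would produce the required witness $w\ge 0$ with $R_{II}w>0$ by feeding $\Psi$ an input tailored to force pushing supported on $I$. Concretely, pick any $z^0\in\R_+^d$ with $z^0_i=0$ for $i\in I$ and $z^0_i>0$ otherwise, a strictly positive $v\in\R^d$, and apply $\Psi$ to $x(t)=z^0-tv$. By continuity, $z_j(t)>0$ on a small initial interval for every $j\notin I$, so the non-idling constraint \eqref{eq:non-idling} freezes $y_j$ there, and only the components of $y$ indexed by $I$ can increase. The identity \eqref{eq:z=x+Ry} together with $z_i(t)\ge 0$ for $i\in I$ then gives $(R_{II}\,y_I(t))_i\ge t\,v_i$. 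Dividing by $t$ and passing to a subsequential limit $w=\lim_{t\to 0^+} y_I(t)/t\ge 0$ (bounded by monotonicity of $y$ and the inequality $z\ge 0$) yields $R_{II}w\ge v_I>0$, so $R_{II}$ is an $\mathcal{S}$-matrix; since $I$ was arbitrary, $R$ is completely-$\mathcal{S}$.

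For sufficiency, I would construct $\Psi$ in three stages. Stage one is a local lemma: at every state $z_0$ with active set $I=\{i:z_{0,i}=0\}$ and every velocity $v$, the $\mathcal{S}$-property of $R_{II}$ produces rates $w\ge 0$ supported on $I$ such that $v+Rw$ keeps $z_0+t(v+Rw)$ in $\R_+^d$ for small $t>0$, giving a local solution whose $y$-rates satisfy complementarity. Stage two iterates this across the breakpoints of a piecewise-linear input $x$, with a combinatorial subclaim that the active face changes only finitely many times on any bounded time interval; this is where the completely-$\mathcal{S}$ hypothesis, applied at every principal submatrix simultaneously, is required to rule out Zeno-type accumulation at a corner. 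Stage three is a uniform-on-compacts Lipschitz estimate for the map $x\mapsto(y,z)$ on the dense class of piecewise-linear inputs, which allows extension by continuity to all of $C([0,\infty),\R^d)$.

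The main obstacle will be Stage two together with the Lipschitz estimate in Stage three. Local $\mathcal{S}$-selection is non-canonical and neighboring faces share lower-dimensional subfaces, so a naive chaining can in principle generate infinitely many switches in finite time; the standard remedy is to choose $w$ as a vertex of a polyhedron associated with the current active set and to invoke the $\mathcal{S}$-condition on every sub-face encountered along the trajectory. This is precisely where the full strength of completely-$\mathcal{S}$ (as opposed to merely $\mathcal{S}$ on $R$ itself) enters, and it is also the step that controls the Lipschitz constant required for the density extension; pushing this argument through is the delicate technical core of the Bernard--El Kharroubi and Taylor--Williams proofs.
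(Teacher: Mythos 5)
The paper does not actually prove this theorem: it is quoted as established in the cited works of Bernard and El Kharroubi and of Taylor and Williams, so there is no in-paper argument to compare yours against. Judged on its own terms, both halves of your sketch have genuine gaps.

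For necessity, the problematic step is the parenthetical claim that $y_I(t)/t$ stays bounded as $t\to 0^+$. Monotonicity of $y$ and $z\ge 0$ give $R_{II}y_I(t)\ge t\,v_I$, but they do not control the magnitude of $y_I(t)$. If $y_I(t)/t$ grows without bound along $t_n\to 0$, normalizing by $|y_I(t_n)|$ rather than by $t_n$ only produces a nonzero $w\ge 0$ with $R_{II}w\ge 0$; that is a semimonotonicity condition, strictly weaker than $\mathcal{S}$, so the argument as written does not close. A clean repair is to argue by contraposition through a theorem of the alternative: an $\mathcal{S}$-matrix is exactly one for which there is \emph{no} nonzero $u\ge 0$ with $R_{II}^{T}u\le 0$. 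If $R_{II}$ is not $\mathcal{S}$, pick such a $u$, take $z^0$ with $z^0_I=0$ and $z^0_j>0$ for $j\notin I$, and drift $\theta_I=-u$, $\theta_{I^c}=0$. For small $t$ the coordinates off $I$ stay positive, so only $y_I$ can move, and then $u^{T}z_I(t)=-t\,|u|^2+(R_{II}^{T}u)^{T}y_I(t)\le -t\,|u|^2<0$, contradicting $u\ge 0$, $z_I\ge 0$. No rate estimate on $y$ is needed.

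For sufficiency, Stages two and three rest on assumptions that fail in this generality. The Lipschitz-on-compacts estimate of Stage three cannot hold for all completely-$\mathcal{S}$ matrices, because, as the paper itself notes, the Skorokhod map $\Psi$ need not even be single-valued in that class; Lipschitz Skorokhod maps are a special feature of the Harrison--Reiman and related restricted reflection matrices, not a consequence of completely-$\mathcal{S}$. Stage two also overclaims: for a general completely-$\mathcal{S}$ $R$, a reflected trajectory driven by a single constant-velocity segment need not visit only finitely many faces, and much of the difficulty in the cited literature is precisely to handle trajectories that spiral among faces or accumulate near lower-dimensional corners. The actual proofs proceed differently: Bernard and El Kharroubi reduce the local problem to a linear complementarity problem and invoke LCP existence theory keyed to the $\mathcal{S}$-property of every principal submatrix, while Taylor and Williams build the SRBM as a weak limit of approximating processes using tightness and oscillation estimates and obtain uniqueness only in law rather than pathwise; neither route relies on a finite number of breakpoints or on a Lipschitz selection.
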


Two special cases of Skorokhod problem are of particular importance. The first one corresponds to the case when $x(t)$
is a stochastic process, specifically a Brownian motion with a starting state $z_0\in\R_+^d$,
drift vector $\theta\in\R^d$ and covariance matrix $\Sigma$,
typically assumed to be non-singular. This special case is usually called Semi-Martingale Reflected Brownian Motion (SRBM) and arises in the context of heavy
traffic theory of queueing networks~\cite{WilliamsSurvey95},\cite{ChenYaoBook}. An SRBM is thus completely specified
by data $(z_0,\theta,\Sigma,R)$. It shown in Taylor and Williams~\cite{TaylorWilliams} that, though the Skorokhod mapping $\Psi$
may not be unique for some completely-$\mathcal{S}$ matrices $R$, in the context of SRBM it is unique in law. Namely, the distribution
of $x(t),y(t),z(t)$ is uniquely defined by $(z_0,\theta,\Sigma,R)$.  An SRBM is defined to be stable if it is positive recurrent, in which case
there exists a unique time invariant distribution. The stability property does not depend on the starting state $z_0$, and thus is a property
of the triplet $(\theta,\Sigma,R)$. A key outstanding open problem is determining when is a triplet $(\theta,\Sigma,R)$ stable.
A significant partial progress is obtained by considering the so-called fluid models or fluid paths in a Skorokhod problem, and this is
our second important special case.

Given a vector $z_0\in \R_+^d$ and a vector $\theta\in\R^d$, consider  the  linear function  $x(t)=z_0+\theta t$,
and the corresponding (set of) solution(s) $(y(t),z(t))=\Psi(x(t))$. The triplet $(x(t),y(t),z(t))$ is called called a fluid path
for the reason discussed below. The system $(z_0,\theta,R)$ or $(\theta,R)$ is called a fluid model of an SRBM.
One can think of a fluid model as an SRBM with a degenerate (deterministic) Brownian motion input function $x(t)$.

\begin{Defi}
A fluid model $(z_0,\theta,R)$ is defined to be stable if every solution $(y(t),z(t))=\Psi(x(t))$ of the Skorokhod problem with
$x(t)=z_0+\theta t$ has property $\lim_{t\rightarrow\infty}z(t)=0$. A fluid model $(\theta,R)$ is defined to be
globally stable if it is stable for every starting state $z_0$.
\end{Defi}
The importance of this definition stems from the following result established in Dupuis and Williams~\cite{dupuis_williams}.
\begin{theorem}
Suppose $(\theta,R)$ is globally stable. Then an SRBM $(\theta,\Sigma,R)$ is stable for every non-singular covariance matrix $\Sigma$.
\end{theorem}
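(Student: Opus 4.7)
My plan is to follow the now-standard fluid-limit route for proving positive recurrence of reflected Markov processes, adapted to SRBM exactly as in Dupuis--Williams. The strategy has four steps: (i) set up a space--time rescaling of the SRBM, (ii) show that any subsequential limit of the rescaled process is a solution of the fluid Skorokhod problem, (iii) use the global stability hypothesis to conclude that all such limits reach the origin in a uniformly bounded time, and (iv) convert this into a Foster--Lyapunov drift condition that implies positive recurrence.

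For steps (i)--(ii), I start the SRBM $Z$ at a state $z_0$ of norm $n$ and form the rescaled processes $Z^n(t)=Z(nt)/n$, $Y^n(t)=Y(nt)/n$, and $X^n(t)=X(nt)/n=z_0/n+\theta t+(1/n)\Sigma^{1/2}B(nt)$. The noise term vanishes uniformly on compacts almost surely by the Brownian law of large numbers, so if $z_0/n\to z^*$ with $|z^*|=1$ then $X^n\to z^*+\theta t$ uniformly on $[0,T]$ a.s. An oscillation/Lipschitz estimate for the Skorokhod map on completely-$\mathcal{S}$ matrices (and, in the multivalued case, passage through measurable selections of $\Psi$ as in Taylor--Williams) forces the family $(Y^n,Z^n)$ to be tight and identifies every almost-sure subsequential limit $(y,z)$ as a fluid path driven by the linear input $x(t)=z^*+\theta t$. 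For steps (iii)--(iv), global stability of $(\theta,R)$ gives $z(t)\to 0$ for every such fluid path, and by compactness of the set of fluid paths starting on the unit sphere there is a deterministic time $T$ after which $z(T)=0$ uniformly in $z^*$; this yields the asymptotic drift $\E[\,|Z(nT)|\,]/n\to 0$ as $n\to\infty$. Combined with routine moment bounds on Brownian increments and the Markov/petite-set structure of SRBM guaranteed by non-singularity of $\Sigma$, the Meyn--Tweedie drift criterion then delivers positive recurrence; the statement holds for every non-singular $\Sigma$ because $\Sigma$ enters only through the vanishing martingale term in the scaling.

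The hard part is justifying the convergence to fluid paths and upgrading the almost-sure fluid limit to the moment estimate needed for Foster--Lyapunov. Specifically, one must (a) control uniform integrability of $|Z^n(T)|$, which forces a priori bounds on the cumulative pushing $Y(t)$ that in turn rely on the completely-$\mathcal{S}$ property of $R$, and (b) verify the petite-set / $\psi$-irreducibility hypotheses so that the asymptotic drift inequality implies an honest geometric drift condition for the continuous-time Markov process. These technical steps are what make the Dupuis--Williams argument substantially more delicate than the corresponding fluid-limit theorems for countable-state queueing networks; once they are in place, the reduction of SRBM stability to fluid-model stability follows the general pattern outlined above.
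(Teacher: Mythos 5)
This theorem is not proved in the paper — it is imported verbatim from Dupuis and Williams \cite{dupuis_williams}, with the paper only remarking that the proof is based on ``the fluid rescaling (functional law of large numbers) technique.'' Your sketch follows exactly that route (space--time scaling of the SRBM, identification of subsequential limits as fluid paths, uniform attraction via compactness, and a Meyn--Tweedie drift criterion), so it matches the approach of the cited source.
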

The proof of this result is based on the fluid rescaling (functional law of large numbers) technique and thus justifies
the terms "fluid model" and "fluid paths". One would naturally hope for a converse result, thus showing that stability
of SRBM is completely determined by the stability of its fluid model $(\theta,R)$. Unfortunately, this hope did not materialize
for six-dimensional SRBMs.  The following result was recently established by Bramson~\cite{BramsonSRBMinstabilityFluid}.
\begin{theorem}
There exists $\theta\in\R_+^6, \Sigma,R\in \R^{6\times 6}$ such that the fluid model $(\theta,R)$ is not globally stable,
but the underlying SRBM $(\theta,\Sigma,R)$ is positive recurrent.
\end{theorem}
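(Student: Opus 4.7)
The plan is to build an explicit counterexample. I would choose a reflection matrix $R\in\R^{6\times 6}$ with a cyclic block structure that makes the faces of $\R_+^6$ visit each other in a fixed rotation under the Skorokhod dynamics, and then pick $\theta\in\R^6$ so that the induced cycle is dynamically "neutral" in the fluid sense (the image of the Skorokhod map from a particular $z_0$ is a non-trivial periodic trajectory), while still being overall drifting inward in an averaged, stochastic sense. The pair $(\theta,R)$ will witness the failure of global fluid stability; adding a non-singular $\Sigma$ will give a diffusion whose randomness destroys the knife-edge orbit and produces positive recurrence.

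For the fluid half, I would start from the observation that when $z_0$ lies on a particular face and only one component of $y$ is active at the unit rate, the Skorokhod equation reduces to an ODE whose solution either hits the next face in finite time or not, depending on the sign of the corresponding coordinate of $R_{\cdot j}+\theta$. Choosing the matrix entries so that successive face-activations form a closed loop and a suitable projective return map has a neutral fixed point, I would exhibit a starting state $z_0$ whose fluid trajectory cycles through the six faces indefinitely at positive distance from the origin. This would give the first conclusion: $\lim_{t\to\infty}z(t)\ne 0$ for this particular fluid path, so $(\theta,R)$ is not globally stable.

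For the SRBM half, the strategy is a Foster–Lyapunov argument. I would seek a nonnegative function $V:\R_+^6\to\R_+$ that is smooth in the interior, satisfies an appropriate boundary condition $\nabla V\cdot R_{\cdot j}\le 0$ whenever $z_j=0$ (so the pushing does not increase $V$), and whose infinitesimal drift along the SRBM generator
\begin{equation*}
\mathcal{L}V(z)=\theta^T\nabla V(z)+\tfrac12\operatorname{tr}\bigl(\Sigma\nabla^2 V(z)\bigr)
\end{equation*}
is uniformly negative outside a compact set. Standard results (Dai, Kurtz–Williams) then imply positive recurrence. A natural candidate is a smoothed, radially homogeneous function adapted to the cyclic geometry of $R$, whose deterministic drift term is zero on the unstable orbit but whose Hessian term is strictly negative there. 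This is the key point: the trace-of-Hessian contribution, absent in the fluid model, supplies the missing negative drift on the set where the fluid dynamics are neutral.

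The main obstacle is the third step. Finding a Lyapunov function that simultaneously respects all six oblique boundary constraints and is strictly supermartingale-like in every direction, including those where the fluid drift vanishes, is delicate; one cannot simply use a norm or a linear functional. I would expect to spend most of the effort calibrating the Hessian of $V$ against $\Sigma$ and verifying $\mathcal{L}V\le-\varepsilon$ on large shells, while ensuring the boundary inequalities hold on each face. A fallback strategy, if constructing $V$ directly is intractable, is to work with the stationary distribution of an auxiliary process obtained by time-change or a coupling argument that dominates the SRBM by a stable reflected diffusion with a simpler reflection matrix, transferring positive recurrence back to $(\theta,\Sigma,R)$.
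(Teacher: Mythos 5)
The paper does not prove this theorem at all: it is quoted verbatim from Bramson's preprint ``A positive recurrent reflecting Brownian motion with divergent fluid path'' (cited as \cite{BramsonSRBMinstabilityFluid}) and serves only as motivating background in the introduction. So there is no proof in this paper to compare your attempt against.

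That said, your proposal has a genuine conceptual gap in its second half. If you could produce a $C^2$, radially homogeneous (degree-one) function $V$ with $\mathcal{L}V\le-\varepsilon$ outside a compact set and $\nabla V\cdot R_{\cdot j}\le 0$ on each face $\{z_j=0\}$, then the fluid model could not fail to be globally stable: for degree-one $V$ the gradient $\nabla V$ is degree-zero and the Hessian is degree $-1$, so the trace term decays like $1/|z|$; hence $\theta^T\nabla V\le -\varepsilon/2$ for $|z|$ large, and along any fluid path the boundary conditions give $\tfrac{d}{dt}V(z(t))\le\theta^T\nabla V\le -\varepsilon/2$, forcing $z(t)\to 0$. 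In other words, the Lyapunov certificate you propose to construct for the SRBM would automatically certify global fluid stability, contradicting the first half of the theorem. This is not a technical obstruction you can calibrate away; it is exactly the reason Bramson's result is surprising. His argument for positive recurrence of the SRBM in the presence of a divergent fluid orbit is necessarily of a different character (direct excursion/trajectory estimates exploiting the diffusive fluctuations on the cycle), not a Foster--Lyapunov bound. Your fallback idea of stochastic domination by a simpler stable reflected diffusion faces the same tension, since a dominating stable process with a nicer $R$ typically comes with its own Lyapunov function that would again transfer down. If you want to pursue this line, you must abandon homogeneous Lyapunov functions entirely and either use a non-homogeneous $V$ whose negativity is genuinely a finite-scale phenomenon that does not persist under fluid rescaling, or move to a trajectory-based argument as Bramson does.
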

On a positive side at least for the case $d\le 3$ the equivalence does take place. Moreover, in this case the stability
of SRBM and the global stability of the corresponding
 luid model can be constructively characterized in terms of $(\theta,R)$. The stability for the case $d=1$ is given simply
 by $\theta<0$ and was known for a while.
The case $d=2$ was resolved by El Kharroubi, Ben Tahar and Yaacoubi~\cite{Kharroubi2000}.
In a later paper El Kharroubi, Ben Tahar and Yaacoubi~\cite{Kharroubi2002} identified
exact conditions for global stability of a fluid model when $d=3$. The link with an SRBM when $d=3$
was resolved only recently in Bramson, Dai and Harrison~\cite{BramsonDaiHarrisonSRBM}.
\begin{theorem}
Suppose $d\le 3$. Then SRBM $(\theta,\Sigma,R)$ is positive recurrent if and only if the fluid model
$(\theta,R)$ is globally stable. Moreover, the stability of the fluid model $(\theta,R)$ can be verified
by checking a system of equalities and inequalities.
\end{theorem}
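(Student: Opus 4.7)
The plan is to tackle the two assertions of the theorem separately: the equivalence between positive recurrence of $(\theta,\Sigma,R)$ and global stability of $(\theta,R)$ when $d\le 3$, and the constructive characterization of the latter. One direction of the equivalence is immediate: Theorem 3 (Dupuis and Williams) gives global stability $\Rightarrow$ positive recurrence in every dimension, so no dimensional hypothesis is needed there. The content to prove is therefore (i) the converse implication in dimensions $d\le 3$, and (ii) a finite, checkable description of when $(\theta,R)$ is globally stable.

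I would treat the constructive characterization dimension by dimension. For $d=1$ the matrix $R$ is a positive scalar, the fluid dynamics reduce to $z(t)=\max(0,z_0+\theta t)$, and global stability is equivalent to $\theta<0$. For $d=2$ there are only two boundary faces and a finite list of ways a fluid path can cycle between them; an explicit sign/inequality system involving the entries of $R^{-1}\theta$ and of the off-diagonal entries of $R$ characterizes global stability, as in El Kharroubi, Ben Tahar and Yaacoubi. For $d=3$ the argument is substantially more intricate. I would follow El Kharroubi et al.\ by enumerating the finitely many kinds of face-activation sequences that a divergent fluid path on the three faces of $\R_+^3$ can exhibit, and showing that global stability is equivalent to the non-existence of any such divergent concatenation. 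This ultimately reduces to a finite Boolean combination of linear inequalities in $\theta$ and sign and determinant conditions on principal submatrices of $R$.

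For the converse implication positive recurrence $\Rightarrow$ global stability in dimensions $d\le 3$, the strategy is a fluid-limit argument. Suppose $(\theta,R)$ is not globally stable, so the explicit characterization produces some initial condition $z_0$ and a divergent fluid path. Apply the functional law of large numbers to the SRBM started from $n z_0$: every subsequential limit of the rescaled processes $\bar Z_n(t)=Z(nt)/n$ is a Skorokhod solution for $(z_0,\theta,R)$. If the SRBM were positive recurrent these limits would all be attracted to the origin, contradicting the existence of a divergent fluid path provided at least one such limit actually coincides with a divergent trajectory. The delicate point, resolved by Bramson, Dai and Harrison, is establishing exactly this matching: in low dimension the set of face-activation patterns is small enough that an extremal unstable pattern survives the fluid scaling.

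The principal obstacle is precisely this last robustness step. Theorem 4 shows that in dimension $6$ the converse fails: there are fluid models that are not globally stable while the associated SRBM is still positive recurrent. Such \emph{fluid-only} instabilities arise from diffusive cancellations that wash out the deterministic cycle once a non-degenerate covariance is added. A proof must therefore exploit the low-dimensional structure in an essential way; in practice this is done by case analysis over the finite list of extremal unstable configurations produced by the constructive characterization, showing in each case that Brownian perturbation cannot stabilize the corresponding dynamics when $d\le 3$. This case analysis, rather than any single general mechanism, is what makes the converse implication tractable precisely for $d\le 3$ and apparently out of reach for larger $d$.
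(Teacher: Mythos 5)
This theorem is not proved in the paper at all: it is a summary of results from the literature, stated with citations to El Kharroubi, Ben~Tahar and Yaacoubi and to Bramson, Dai and Harrison, and the authors explicitly decline even to state the system of inequalities, referring the reader directly to \cite{BramsonDaiHarrisonSRBM}. So there is no internal proof for your proposal to match; the "paper's approach" is to cite, not to argue.

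Taken on its own terms, your outline reproduces the correct division of labor among the cited papers (Dupuis--Williams for the easy implication, El Kharroubi et al.\ for $d=1,2,3$ fluid characterizations via finite face-activation enumeration, Bramson--Dai--Harrison for the $d=3$ SRBM link) and correctly identifies the obstruction -- the Bramson $d=6$ counterexample shows that fluid divergence need not survive to the diffusive level. But the converse direction as you sketch it has a genuine gap that you acknowledge without filling: the existence of \emph{some} divergent solution to the Skorokhod problem for $(z_0,\theta,R)$ does not imply that subsequential fluid limits of the SRBM started from $nz_0$ actually realize that divergent solution. Since the Skorokhod map need not be single-valued, a fluid limit can well select an attracting solution even when a divergent one coexists -- that is precisely what happens in dimension six. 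Your remedy ("case analysis over the finite list of extremal unstable configurations, showing Brownian perturbation cannot stabilize the dynamics") is a fair description of the shape of the Bramson--Dai--Harrison argument, but it is not itself a proof; it is the theorem restated. So the proposal is a reasonable annotated bibliography of what would have to be proved, not a proof, and in that sense it is doing the same thing as the paper, just at greater length.
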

The second part of the theorem needs elaboration, but instead of explicitly giving the set of global stability conditions
we simply refer the reader to~\cite{BramsonDaiHarrisonSRBM}. For us the only relevant fact is that the stability condition
can be \emph{constructively} verified by checking a series of equations and inequalities in a rather straightforward way.
In light of this positive result, one would hope to extend this result for general $d$. We can summarize this question
as well as the state of the art as follows.

\begin{enumerate}
\item[\textbf{(a)}] When is an SRBM $(\theta,\Sigma,R)$ positive recurrent? The answer is known for $d=1,2,3$, but is unknown for general $d$.
\item[\textbf{(b)}] When is a fluid model $(\theta,R)$ globally stable? The answer is known for $d=1,2,3$ and coincides
with the answer for (a) for every non-singular $\Sigma$, but is unknown for general $d$.
\item[\textbf{(c)}] When is a fluid model $(z_0,\theta,R)$ stable?
\end{enumerate}

In this paper we resolve question (c), albeit in a somewhat unexpected way. We establish that this problem is undecidable and this is the
main result of this paper.

\begin{theorem}\label{theorem:mainresult}
The property "$(z_0,\theta,R)$ is stable" is algorithmically undecidable (non-computable). Namely, there does not exist an algorithm
which given an arbitrary input $z_0\in\R^d_+,\theta\in\R^d$ and a completely-$\mathcal{S}$ matrix $R$ outputs YES, if the fluid
model $(z_0,\theta,R)$ is stable and NO otherwise.
\end{theorem}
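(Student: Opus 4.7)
The plan is to prove Theorem~\ref{theorem:mainresult} by a reduction from the Halting Problem for two-counter (Minsky) machines, which is a classical undecidable problem. Given a counter machine $M$ together with an initial configuration (state plus counter values), I will construct in a uniform, computable way a triple $(z_0,\theta,R)$ such that the fluid model $(z_0,\theta,R)$ is stable (in the sense $z(t)\to 0$) if and only if $M$ halts when started from the given configuration. Since halting is undecidable, any algorithm deciding fluid stability would yield an algorithm for halting, a contradiction. This also explains the form of the theorem: because the starting state $z_0$ carries the initial counter values, the hardness is naturally captured when $z_0$ is part of the input.

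The encoding of a configuration into $\R_+^d$ would work as follows. I will dedicate two distinguished coordinates of $z$ to the counters $c_1,c_2$, a block of coordinates serving as indicator variables for the finite set of states of $M$ (at any valid moment exactly one is ``activated''), and auxiliary coordinates that serve as timers, buffers, and test coordinates that become zero precisely when a counter is zero. Each instruction of $M$ (increment, decrement, zero-test-and-branch) is translated into a module: during one phase of the fluid trajectory, a specific subset of the $y_j$'s is active at unit rate while the corresponding rows of $R$ feed into the counter coordinates and the state indicators the right pattern of positive and negative unit increments. The key observation is that the non-idling condition $\int_0^\infty z_j\,dy_j = 0$ acts as a boolean test: a coordinate $y_j$ can only become active once the associated $z_j$ hits $0$, and conversely the affine $x(t)=z_0+\theta t$ together with the column structure of $R$ can be arranged so that the ``next'' module is triggered exactly when the current module has completed one simulated step. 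This is the content of Sections~\ref{section:Reduction} and \ref{section:dynamics}.

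Having established that the fluid path faithfully tracks the sequence of CM configurations, I will, in the spirit of Section~\ref{section:wrapup}, adjoin a ``drain'' mechanism to the construction: a small additional set of coordinates and a global negative drift $\theta'$ that is activated \emph{only} when the halting state indicator becomes active. If $M$ halts after finitely many steps, the simulation reaches the halting module in finite time, the drain kicks in, and by choosing parameters so that all remaining coordinates are drawn monotonically to $0$ under the associated Skorokhod dynamics we obtain $z(t)\to 0$. If $M$ never halts, the simulation produces an infinite sequence of modules and at least one of the state-indicator or counter coordinates remains bounded away from $0$ infinitely often (equivalently, $\limsup_{t\to\infty}\|z(t)\|>0$), so the fluid model is not stable. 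I need to verify that $R$ can be chosen to be completely-$\mathcal{S}$ so that the Skorokhod map $\Psi$ is well defined; this will be arranged by making $R$ diagonally dominant in the relevant principal submatrices, which is compatible with the block structure of the modules.

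The main obstacle I anticipate is the rigorous verification that the Skorokhod dynamics cannot ``cheat'' the simulation. Because $\Psi$ need not be unique for all completely-$\mathcal{S}$ matrices, I must ensure either that uniqueness holds on the trajectories I use, or that \emph{every} admissible solution faithfully executes the intended module sequence; otherwise a pathological solution could evade the encoding and create a spurious stable or unstable trajectory. This is handled by designing each module so that the system of complementarity conditions (\ref{eq:z=x+Ry})--(\ref{eq:non-idling}) admits, along the trajectory, a unique combinatorial pattern of active constraints, forcing the $y_j$'s to evolve in the intended order and at the intended unit rates. Conditional on this combinatorial rigidity, the four items above assemble into a proof of Theorem~\ref{theorem:mainresult}.
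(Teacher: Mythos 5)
Your plan is essentially the paper's: reduce from the halting problem for a two-counter machine (Theorem~\ref{theorem:counter_undecidable2}), encode the configuration in distinguished coordinates of $z$ (counters, state indicators, auxiliaries), drive the simulation in phases using the complementarity conditions (\ref{eq:z=x+Ry})--(\ref{eq:non-idling}), adjoin a halting-triggered drain, and separately verify completely-$\mathcal{S}$ (Proposition~\ref{prop:S-completeness}) and the combinatorial rigidity of the active-set pattern so that every Skorokhod solution runs the intended simulation (Lemmas~\ref{lemma:xA dynamics}--\ref{lemma:5t+4 5t+5 positive counters} and the case analysis in Proposition~\ref{prop:stability}). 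You correctly identify the nonuniqueness of $\Psi$ as the central obstruction and that it must be addressed trajectory-by-trajectory, which is exactly what the paper's lemmas do.

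One part of your drain mechanism is not implementable as stated, though the fix keeps you on the paper's track: in a fluid Skorokhod problem the input is $x(t)=z_0+\theta t$ with $\theta$ fixed for all time, so there is no way to ``activate a global negative drift $\theta'$\,'' conditional on a state indicator. Any state-dependent behavior must be routed through the pushing term $Ry(t)$, whose activation is itself state-dependent via the non-idling constraint (\ref{eq:non-idling}). This is exactly what Section~\ref{section:wrapup} does: $\theta$ is left unchanged, and instead the columns of $R$ corresponding to the halting-state indicator $(B,i^*)$ are modified (the entries of $AB$, $EB$, $FB$), so that once $z_{B,i^*}$ hits zero and $y_{B,i^*}$ becomes active, the induced pushing cancels the remaining positive coordinates and one gets $z(5T+1)=0$ exactly. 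With the drain rephrased as a modification of $R$ rather than of $\theta$, your plan matches the paper's proof.
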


While we could only establish this result for problem (c), we conjecture that all problems (a)-(c) are undecidable.
The difficulty of establishing undecidability of the (b) problem is as follows.
While there is a well defined notion of a "global" halting property of a Counter Machine, unfortunately this does not appear to imply undecidability of
the global stability of $(\theta,R)$ property, since in our reduction there are starting states $z_0$ and the corresponding trajectory
which does not correspond to any trajectory of the Counter Machine. In other words the mapping $\Gamma\rightarrow (z_0,\theta,R)$
is not necessarily one-to-one as far as trajectories are concerned. Without analyzing the trajectories not corresponding to the
trajectories of the underlying Counter Machine it does not appear to be possible to use the reduction tool.

Proving undecidability of (a) also seems problematic since we need to simulate deterministic trajectories of a Counter Machine with
stochastic trajectories of an SRBM. In order to prove undecidability of the positive recurrence of SRBM,
it appears that one would need to construct an SRBM in which we can "control" stochastic trajectories for an
infinite amount of time. At the present moment we are not capable of doing this.

Theorem~\ref{theorem:mainresult} applies to Skorokhod problem in general dimension $d$. It would be natural to conjecture that the
stability of $(z_0,\theta,R)$ problem is undecidable for some fixed dimension $d$, for example when $d=6$ - the dimension
in which the counterexample of Theorem~\ref{theorem:counter-to-Skorokhod} takes place. In our reduction the dimension
$d$ of the Skorokhod problem depends directly on the number of states (see the next section) of the underlying Counter Machine.
It turns out that the Halting
problem of a Counter Machine is undecidable only when the number of states is general, since
there exist only finitely many Counter Machines with a given number of states. It is known
that any problem with a finite number of instances is decidable. This, however does not imply the same for the Skorokhod problem, since
the number of Skorokhod problems instances $(z_0,\theta,R)$ is infinite even for a \emph{fixed} dimension $d$, since there is an infinite
set of possibilities for the parameters $z_0,\theta$ and $R$ even for a fixed dimension $d$. All this means is that
our reduction technique does not lead to the instability result for the case of a fixed dimension $d$. We conjecture that there exists
a large enough, but dimension $d$ (perhaps $d=6$) in which the stability of a fluid model $(z_0,\theta,R)$ is undecidable.

\section{Counter Machine, Halting Problem and undecidability}\label{section:CounterAut}
A Counter Machine which we define below is a deterministic computing
machine which is a simplified version of a Turing Machine -- a
formal description of an algorithm performing a certain computational task or solving a certain decision problem.
(For a definition of a
Turing Machine see~\cite{sipser}).
We now define a Counter Machine and the Counter Machine Halting Problem (CMHP) which is known to be undecidable.
Our main technical result is the reduction of CMHP into the stability of a Skorokhod problem $(z_0,\theta,R)$
such that the Counter Machine halts if and only if $(z_0,\theta,R)$ halts.

A Counter Machine is described by 2 counters $R_1,R_2$ and a finite
collection of states $S=\{1,2,\ldots,m\}$. Each counter $R_i$ contains some nonnegative
integer  in its register. Depending on the current state $i\in S$ and
depending on whether the content of the registers is positive or
zero, the Counter Machine is updated as follows: the current state
$i$ is updated to a new state $j\in S$ and one of the counters has
its number in the register incremented by one, decremented by one or
no change in the counters occurs. More specifically, a  Counter Machine is a pair $(S=\{1,\ldots,m\},\Gamma)$.
where $\Gamma$ is
configuration update function $\Gamma:S\times \{0,1\}^2\rightarrow
S\times\{(-1,0),(0,-1),(0,0),(1,0),(0,1)\}$. A configuration of a Counter Machine is an
arbitrary triplet $(i,C_1,C_2)\in S\times \mathbb{Z}_+^2$. A
configuration $(i,C_1,C_2)$ is updated to a configuration
$(i,C_1',C_2')$ as follows.
Given a configuration $(i,C_1,C_2)$ suppose
$\Gamma(i,\mb{1}\{C_1>0\},\mb{1}\{C_2>0\})=(i',1,0)$.
Then the current state is changed from $i$
to $i'$, the content of the first counter   is incremented by one
and the second counter does not change: $C_1'=C_1+1, C_2'=C_2$. We
will also write $\Gamma:(i,C_1,C_2)\rightarrow (i',C_1+1,C_2)$.
Suppose, on the other hand,  $\Gamma(i,\mb{1}\{C_1>0\},\mb{1}\{C_2>0\})=(i',(-1,0))$. Then
the current state becomes $i'$, $C_1'=C_1-1,C_2'=C_2$. Similarly, if
$\Gamma(i,\mb{1}\{C_1>0\},\mb{1}\{C_2>0\})=(i',(0,1))$ or  $\Gamma(i,\mb{1}\{C_1>0\},\mb{1}\{C_2>0\})=(i',(0,-1))$, the new
configuration becomes $(i,C_1,C_2+1)$ or $(i',C_1,C_2-1)$,
respectively. If $\Gamma(i,\mb{1}\{C_1>0\},\mb{1}\{C_2>0\})=(i',(0,0))$ then the state is updated to
$i'$, but the contents of the counters do not change. It is assumed that the configuration update
function $\Gamma$ is consistent in the sense that it never attempts to decrement a counter which is equal to zero.
The present definition of a Counter Machine can be extended to the one which incorporates more than
two counters, but  such an extension is not necessary
for our purposes.

Given an initial configuration $(i^0,z^0_1,z^0_2)\in S\times \mathbb{Z}_+^2$ the Counter
Machine uniquely determines the subsequent configurations
$(i^1,z^1_1,z^1_2), (i^2,z^2_1,z^2_2),
\ldots,(i^t,z^t_1,z^t_2),\ldots~.$ We fix a certain
configuration $(i^*,C_1^*,C_2^*)\in S\times \Z_+^2$ and call it the \emph{halting}
configuration. If this configuration is reached then the process
halts and no additional updates are executed. The following theorem
establishes the undecidability (also called non-computability) of the halting property.
It is a classical result and can be found in~\cite{hooper}.

\begin{theorem}\label{theorem:counter_undecidable}
Given a Counter Machine $(S,\Gamma)$, initial configuration
$(i^0,C_1^0,C_2^0)$ and the halting configuration
$(i^*,C_1^*,C_2^*)$, the problem of determining whether the halting
configuration is reached in finite time (the Halting Problem) is undecidable.
Without the loss of generality it may be assumed that $C_1^*=C_2^*=0$.
\end{theorem}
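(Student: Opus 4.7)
The plan is to reduce from the classical Halting Problem for Turing Machines, which is known to be undecidable. Since a Counter Machine only supports the primitives \emph{increment}, \emph{decrement}, and \emph{zero test} on its registers, one cannot simulate a Turing Machine directly with two counters in an obvious way, so the reduction naturally splits into two stages: first simulate a Turing Machine by a $k$-counter machine for some $k\ge 3$, and then collapse the number of counters from $k$ down to $2$ by a number-theoretic encoding. This is Minsky's classical construction, and I would follow it.

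For the first stage, let $M$ be a Turing Machine with tape alphabet $\Sigma=\{0,1,\ldots,b-1\}$ (with $0$ the blank symbol). At each step of $M$, represent the tape as two nonnegative integers $L$ and $R$, where $L$ encodes the symbols strictly to the left of the head read as a base-$b$ numeral (least significant digit nearest the head), and $R$ encodes the symbols under and to the right of the head in the same fashion. The state of $M$ is absorbed into the finite control of the counter machine. A move to the right is realized by extracting the low-order digit of $R$ (via repeated subtraction of $b$ using an auxiliary counter) to identify the symbol under the head, writing the new symbol into the low-order digit of $L$ (via multiplication of $L$ by $b$, again using an auxiliary counter for transfer), and shifting. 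All of these arithmetic operations reduce to sequences of increments, decrements, and zero tests when one allows a third scratch counter. This produces a $3$-counter machine whose halting behavior coincides with that of $M$.

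For the second stage, encode the triple $(c_1,c_2,c_3)$ of three counter values as the single integer $N=2^{c_1}3^{c_2}5^{c_3}$ held in one counter, using the second counter as scratch. Then incrementing $c_i$ corresponds to multiplying $N$ by the $i$-th prime (realized by transferring $N$ to the scratch counter and copying it back the appropriate number of times), decrementing $c_i$ corresponds to dividing $N$ by that prime when possible, and testing $c_i=0$ corresponds to testing whether $N$ is coprime to that prime. Each of these operations can be implemented as a finite-state subroutine using only the two-counter primitives; the control state of the resulting 2-counter machine tracks both the simulated state of the 3-counter machine and the current position within whichever subroutine is executing. Consequently the 2-counter machine halts on input encoding $(i^0,C_1^0,C_2^0)$ if and only if the original Turing Machine halts on the corresponding input, so CMHP inherits undecidability from the Turing Halting Problem.

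Finally, to match the exact statement of the theorem I would normalize the halting configuration. After the simulation reaches what would be the halt state of the simulated Turing Machine, append a deterministic cleanup subroutine that repeatedly decrements each counter until a zero test succeeds and then transitions into a distinguished new state $i^*$; this changes the halting configuration to $(i^*,0,0)$ without affecting whether halting ever occurs. The main technical obstacle is not conceptual but bookkeeping: verifying that the arithmetic subroutines (multiplication, division, and divisibility tests by small primes) can be implemented with only increment, decrement, and zero-test primitives, and that the simulation is faithful in both directions so that halting is preserved exactly. Once these subroutines are laid out and their correctness checked state by state, the reduction from the Turing Halting Problem is effective, and the undecidability of CMHP follows.
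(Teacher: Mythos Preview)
Your sketch is correct and follows the classical Minsky two-stage construction (Turing machine $\to$ $3$-counter machine $\to$ $2$-counter machine via the $2^{c_1}3^{c_2}5^{c_3}$ encoding), together with a clean-up routine to normalize the halting configuration to $(i^*,0,0)$. There is no genuine gap; the bookkeeping you flag is exactly what needs to be done, and it goes through as you describe.

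As for comparison with the paper: the paper does not prove this theorem at all. It states it as a classical result and cites Hooper~\cite{hooper} (and, implicitly, the standard treatments such as~\cite{hopcroft}). So your proposal is not so much an alternative to the paper's argument as it is a fleshing-out of what the paper takes as a black box. The only proof the paper actually writes in this section is for the minor variant Theorem~\ref{theorem:counter_undecidable2}, which appends a single new state to force $C_1=C_2=1$ upon halting; that short reduction is orthogonal to what you wrote.
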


We will need  to following small modification of the theorem above.

\begin{theorem}\label{theorem:counter_undecidable2}
Given a Counter Machine $(S,\Gamma)$, initial configuration
$(i^0,C_1^0,C_2^0)$ and a state $i^*$, the problem of determining whether the
state $i^*$ is reached in finite time  is undecidable. Moreover, without the loss
of generality it may be assumed that if and when the state $i^*$ is reached, we also have
$C_1=C_2=1$.
\end{theorem}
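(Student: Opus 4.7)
The plan is to reduce Theorem~\ref{theorem:counter_undecidable} to the new statement by a small gadget construction on the Counter Machine. Given $(S,\Gamma)$ with initial configuration $(i^0,C_1^0,C_2^0)$ and halting configuration $(i^*,0,0)$, I would build an augmented Counter Machine $(S',\Gamma')$ with $S' = S \cup \{q_1,q_2,\tilde{i}^*\}$, where the three fresh states form a linear chain appended at the halting configuration. The transition function $\Gamma'$ is set to agree with $\Gamma$ everywhere on $S\times\{0,1\}^2$ \emph{except} at $\Gamma(i^*,0,0)$; the single altered transition is $\Gamma'(i^*,0,0) = (q_1,(1,0))$, so that instead of halting, the new machine steps into the gadget while incrementing $C_1$. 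I then set $\Gamma'(q_1,1,0) = (q_2,(0,1))$ and $\Gamma'(q_2,1,1) = (\tilde{i}^*,(0,0))$, and make $\tilde{i}^*$ absorbing by a no-op self-loop. The remaining (unreachable) entries of $\Gamma'$ involving $q_1,q_2,\tilde{i}^*$ can be defined arbitrarily, subject only to the non-decrement-zero rule, and in particular can be chosen so that no transition of $\Gamma'$ originating outside the gadget points into $\{q_1,q_2,\tilde{i}^*\}$.

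The forward direction of the reduction is immediate: if $(S,\Gamma)$ reaches $(i^*,0,0)$, then because $\Gamma'$ coincides with $\Gamma$ on every configuration preceding that moment, the new machine also reaches $(i^*,0,0)$ and then executes the chain $(i^*,0,0)\to(q_1,1,0)\to(q_2,1,1)\to(\tilde{i}^*,1,1)$, so $\tilde{i}^*$ is reached with $C_1=C_2=1$ as required by the second sentence of the theorem. For the converse, note that by construction the only transition leading into $\tilde{i}^*$ is from $(q_2,1,1)$, the only transition leading into $q_2$ is from $(q_1,1,0)$, and the only transition leading into $q_1$ is from $(i^*,0,0)$ via the modified rule. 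Hence any trajectory of $(S',\Gamma')$ that reaches $\tilde{i}^*$ must have first visited the configuration $(i^*,0,0)$; but up to that first visit the machines $(S,\Gamma)$ and $(S',\Gamma')$ produce identical configuration sequences, so $(S,\Gamma)$ also reaches its halting configuration.

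Thus an algorithm deciding state-reachability for $(S',\Gamma')$ would decide the Halting Problem of Theorem~\ref{theorem:counter_undecidable}, giving undecidability of the state-reachability problem and simultaneously the ``moreover'' clause $C_1=C_2=1$. The only substantive thing to check is that the auxiliary states are inaccessible except through the designed entry point, which is built into the freedom we have in defining $\Gamma'$ on the otherwise unused domain values; there are no other subtleties, since we are only relabeling the detection of the halting configuration as the reaching of a particular state, while preserving $(C_1,C_2)=(1,1)$ on arrival.
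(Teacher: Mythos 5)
Your reduction is correct and essentially mirrors the paper's: append fresh state(s) past the halting configuration $(i^*,0,0)$, redirect $\Gamma$ there so that the new terminal state is reached exactly when the original machine halts, and arrange for both counters to equal one on arrival. In fact your two-step chain through $q_1,q_2$ is slightly more careful than the paper's single-step $\Gamma(i^*,0,0)=(m+1,1,1)$, since a simultaneous double increment is not literally in the update alphabet $\{(-1,0),(0,-1),(0,0),(1,0),(0,1)\}$ the paper permits.
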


Namely this theorem states that determining whether a given \emph{state} (vs configuration) is reached
is undecidable as well, and if it is reached the counters take particular values (unity).
We call $i^*$ the halting state.

\begin{proof}
The proof is a simple reduction from the CMHT. Given a Counter Machine with the halting configuration $(i^*,0,0)$, augment
the set of states by a new state $m+1$. Define $\Gamma(i^*,0,0)=(m+1,1,1)$ (regardless of what it was before) and leave
$\Gamma(i^*,C_1,C_2)$ intact for all other $C_1,C_2$. Note that we have updated only the halting configuration.
Define $\Gamma(m+1,\cdot,\cdot)$ arbitrarily. Notice that state $m+1$ is reached in the augmented Counter Machine
if and only if the original Counter Machine halts starting from $(i^0,C_1^0,C_2^0)$. Notice also that by our augmented
rule if $m+1$ is reached at time $t$, then at this time both counters have values equal to unity.
\end{proof}

\section{A reduction of a Counter Machine into a Skorokhod problem}\label{section:Reduction}
Before we present the formal proof of our main result, Theorem~\ref{theorem:mainresult}, we describe a high level idea behind the proof.
In this and the following Section~\ref{section:dynamics} we consider an arbitrary Counter Machine
with $m$ states and construct a Skorokhod problem with the following properties.
The dimension of the Skorokhod problem is $d=5m+9$. Two of the coordinates of the $z(t)$ vector in the Skorokhod problem will
be "responsible" for the value of the counters in the Counter Machine, and $m$ other coordinates of $z(t)$ will be "responsible"
for the state of the Counter Machine. The remaining coordinates serve auxiliary roles. Specifically, the construction will be such
that if the counter machine has counter values $C_1$ and $C_2$ and is in state $i$ at time $t$, then coordinates of $z$ corresponding to counters
have also values  $C_1$ and $C_2$ at time $5t$, and the $m$ coordinates corresponding to the state have all values $1$ except for the coordinate
corresponding to the state $i$, which will have value zero at time $5t$. The time interval $[5t,5t+5]$  will correspond to emulating
the one step $t\rightarrow t+1$ of the Counter Machine. Thus we can "read off" the state of the Counter Machine from the state $z(t)$ of the
Skorokhod problem. This by itself does not prove Theorem~\ref{theorem:mainresult} since, as it turns out the Skorokhod problem
constructed this way is never stable, in the sense that it never enters the zero state. The completion of the proof is presented in
Section~\ref{section:wrapup} where we construct a small modification of our Skorokhod problem with the following property: if the
Counter Machine enters the halting state $i^*$ at some time $T$, then our modified Skorokhod problem enters zero state at time
$5T+1$. Alternatively, if the Counter Machine never enters state $i^*$, then our Skrokhod problem never enters the zero state.
Additionally, we show that the $R$ matrix corresponding to our modified Skorokhod problem is completely-$\mathcal{S}$. From this
construction we conclude that if we had an algorithm for checking whether a Skorokhod problem $(z_0,\theta,R)$ with a completely-$\mathcal{S}$
matrix $R$ is stable, we would also have an algorithm for checking whether a Counter Machine enters its halting state $i^*$, and thus
obtain a contradiction.

We now present details of the construction and the proof of Theorem~\ref{theorem:mainresult}.
Consider a Counter Machine with states $1,\ldots,m$ and starting configuration $(i^0,C_1^0,C_2^0)$.
We now construct a  Skorokhod problem (\ref{eq:z=x+Ry}),(\ref{eq:yincreasing}),(\ref{eq:non-idling}).
$x,y,z$ have dimension $d=5m+9$. $x(t)=z_0-\theta t,\theta\in\Z_+^d$
where $\theta_i=-1, 1\le i\le 5$ and $\theta_i=0$ otherwise, and $z_0$ will be specified later. Thus
\begin{align}\label{eq:beta}
\theta=\left(
        \begin{array}{c}
          -1 \\
          -1 \\
          -1 \\
          -1 \\
          -1 \\
          0 \\
          \vdots \\
          0 \\
        \end{array}
      \right)
\end{align}
We now describe matrix $R\in\Z^{d\times d}$.  The rows and columns of $R$ are grouped according to different roles they will be playing
in the construction. For convenience the groups are denoted by $A,B,C,D,E,F$, where the sizes are $5,m,2,2,4m$ and $4m$ respectively.
Specifically the first five rows and columns belong to the group $A$,  the next $m$ rows and columns belong to the group $B$, the following
two rows and columns are group $C$, etc.
The submatrix consisting of rows in one group and columns in some possibly other group will be denoted by a concatenation of the corresponding
letters. So, for example the $5\times 5$ submatrix $AA$ consists of rows and columns in $A$, $m\times 4m$
submatrix $BE$ consists of rows in $B$ and columns in $E$, etc.
Several  submatrices will be equal to zero and will be denoted by $0$. Several other square submatrices are identity matrices and are denoted by $I$.
In particular, our matrix $R$ is assumed to have the following structure:
\begin{align}\label{eq:R}
R=\left(
        \begin{array}{cccccc}
          AA & 0 & 0 & 0 & 0 & 0 \\
          BA & I & 0 & 0 & BE & 0 \\
          CA & 0 & I & I & CE & 0 \\
          DA & 0 & I & I & 0 & 0 \\
          EA & EB & EC & 0 & I& I \\
          FA & FB & FC & 0 & I& I \\
        \end{array}
      \right)
\end{align}
Let us now describe the remaining submatrices of the matrix $R$. $AA$ is set to be
\begin{align}\label{eq:AA}
AA=\left(
        \begin{array}{ccccc}
          1 & 2 & 1 & 1 & 0  \\
          0 & 1 & 2 & 1 & 1  \\
          1 & 0 & 1 & 2 & 1  \\
          1 & 1 & 0 & 1 & 2  \\
          2 & 1 & 1 & 0 & 1  \\
        \end{array}
      \right)
\end{align}
Namely $AA$ is a row vector $[1,2,1,1,0]$ rotated five times. From now on we adopt a notation of the form $XY_{i,j}$ where
$X,Y$ are one of the matrices $A,B,\ldots,F$ and $i$ and $j$ range over indices of $X$ and $Y$ respectively. So for example
$AA_{2,3}$ is the entry $(2,3)$ in the submatrix $AA$. Namely, $AA_{2,3}=2$.
We will also use notation of the form $XY_{i,*}$ to indicate a row vector corresponding to index $i$ in the submatrix $XY$.
Similarly, we will us $XY_{*,j}$, or $R_{i,*},R_{*,j}$ when the underlying matrix is entire $R$.

Let us describe the $m\times 5$ matrix $BA$. We set
$BA_{i,1}=-1$  and $BA_{i,j}=0$ for $2\le j\le 5$ for all $1\le i\le m$. As for the matrix $BE$,
for every $1\le i,j\le m$ and every $b,c\in \{0,1\}$, define $BE_{j,4i+2b+c}=0$ if $\Gamma(i,b,c)=(j,\Delta_1,\Delta_2)$ for some
updates $\Delta_1,\Delta_2\in \{-1,0,1\}$ and $=1$ otherwise, where $\Gamma$ is the configuration update function of the underlying
Counter Machine. In the matrix form we have
\begin{align}\label{eq:BABE}
BA=\left(
        \begin{array}{ccccc}
          -1 & 0 & 0 & 0 &  0  \\
          -1 & 0 & 0 & 0 &  0   \\
          \vdots & \vdots & \vdots & \vdots & \vdots \\
          -1 & 0 & 0 & 0 &  0   \\
        \end{array}
      \right),
\qquad
BE=\left(
        \begin{array}{ccccc}
          BE_{1,1} & BE_{1,2} &  \hdots &  BE_{1,4m}  \\
          BE_{2,1} & BE_{2,2} &  \hdots &  BE_{2,4m}  \\
          \vdots & \vdots & \ddots & \vdots \\
          BE_{m,1} & BE_{m,2} &  \hdots &  BE_{m,4m}  \\
        \end{array}
      \right)
\end{align}
Matrices $CA$ and $CE$ are $2\times 5$ and $2\times 4m$ respectively. For $i=1,2$ define
$CA_{i,1}=CA_{i,4}=-1$ and $CA_{i,j}=0$ otherwise. Also define
$CE_{1,4i+2b+c}=\Delta_1+1,CE_{2,4i+2b+c}=\Delta_2+1$ if $\Gamma(i,b,c)=(j,\Delta_1,\Delta_2)$.
Observe that $CE_{1,4i+2b+c},CE_{2,4i+2b+c}\ge 0$. Thus
\begin{align}\label{eq:CACE}
CA=\left(
        \begin{array}{ccccc}
          -1 & 0 & 0 & -1 &  0  \\
          -1 & 0 & 0 & -1 &  0   \\
        \end{array}
      \right),
\qquad
CE=\left(
        \begin{array}{ccccc}
          CE_{1,1} & CE_{1,2} &  \hdots &  CE_{1,4m}  \\
          CE_{2,1} & CE_{2,2} &  \hdots &  CE_{2,4m}  \\
        \end{array}
      \right)
\end{align}
Matrix $DA$ is  $2\times 5$. For $i=1,2$ define
$DA_{i,2}=-1$ and $DA_{i,j}=0$ otherwise. Thus:
\begin{align}\label{eq:DACE}
DA=\left(
        \begin{array}{ccccc}
          0 & -1 & 0 & 0 &  0  \\
         0 & -1 & 0 & 0 &  0  \\
        \end{array}
      \right)
\end{align}
Matrices $EA,EB$ and $EC$ are $4m\times 5, 4m\times m$ and $4m\times 2$ respectively. Define
$EA_{4i+2b+c,1}=-b-c, EA_{4i+2b+c,3}=-1$ and  $EA_{4i+2b+c,j}=0$ for $j=2,4,5$.
Define $EB_{4i+2b+c,i}=-1$, $EB_{4i+2b+c,j}=0$ for $j\ne i, ~1\le i,j\le m$. Finally,
define $EC_{4i+2b+c,1}=2b-1, EC_{4i+2b+c,2}=2c-1$. Namely
\begin{align}\label{eq:EAEC}
EA=\left(
        \begin{array}{ccccc}
          EA_{1,1} & 0 & -1 & 0 &  0  \\
          \vdots & \vdots & \vdots & \vdots &  \vdots   \\
          EA_{4m,1} & 0 & -1 & 0 &  0  \\
        \end{array}
      \right),
~~
EB=\left(
     \begin{array}{cccc}
       -1 & 0 & \hdots & 0\\
       -1 & 0 & \hdots & 0\\
       -1 & 0 & \hdots & 0\\
       -1 & 0 & \hdots & 0\\
       \vdots & \vdots & \ddots & \vdots\\
       0 & 0 & \hdots & -1 \\
       0 & 0 & \hdots & -1 \\
       0 & 0 & \hdots & -1 \\
       0 & 0 & \hdots & -1 \\
     \end{array}
   \right),
~~
EC=\left(
        \begin{array}{ccccc}
           EC_{1,1} & EC_{1,2} \\
          \vdots & \vdots \\
          EC_{4m,1} & EC_{4m,2} \\
        \end{array}
      \right)
\end{align}
Finally, matrices
$FA,FB$ and $FC$ are $4m\times 5, 4m\times m$ and $4m\times 2$ respectively. We set $FB=EB, FC=EC$.
Define
$FA_{4i+2b+c,1}=-b-c, FA_{4i+2b+c,3}=-1$,  $FA_{4i+2b+c,4}=-4$ and $FA_{4i+2b+c,5}=4$:
\begin{align}\label{eq:FAFC}
FA=\left(
        \begin{array}{ccccc}
          FA_{1,1} & 0 & -1 & -4 &  4  \\
          \vdots & \vdots & \vdots & \vdots &  \vdots   \\
          FA_{4m,1} & 0 & -1 & -4 &  4  \\
        \end{array}
      \right),
\qquad
FC=\left(
        \begin{array}{ccccc}
           FC_{1,1} & FC_{1,2} \\
          \vdots & \vdots \\
          FC_{4m,1} & FC_{4m,2} \\
        \end{array}
      \right)
\end{align}
This completes the description of the $R$ matrix. A natural question is whether $R$ is completely-$\mathcal{S}$. We defer this question to
Section~\ref{section:wrapup} where we first modify $R$ slightly and then show that indeed its modification is completely-$\mathcal{S}$.

\section{Dynamics of the Skorokhod problem}\label{section:dynamics}
Consistent with our notations $A,B,\ldots,F$, the coordinates of $\theta$ and the processes $x(t),y(t),z(t)$ are grouped by $A,B,\ldots,F$
as well. Again we will use the notation of the form $l_X, X=A,B,\ldots,F$ to denote a portion of a vector $l$ corresponding to the group $X$.
Let $l_{X,i}$ denote the $i$-th entry corresponding to the portion $X$. We will also sometimes use $l_{X,*}$ in place of $l_X$.
Specifically, as per construction in the previous section $\theta_A=\theta_{A,*}=(-1,\ldots,-1)^T,\theta_B=0,\ldots,\theta_F=0$.
Equivalently $\theta_{A,1}=\ldots=\theta_{A,5}=-1,\theta_{B,1}=\ldots=\theta_{B,m}=0$, etc. Also $\theta_X=0$ for $X=B,C,D,E,F$.

Now let us define the initial state $x(0)=z(0)\equiv z_0$ of the process $x(t)$.
Suppose  the configuration of the Counter Machine at time$t=0$ is $(i^0,C^0_1,C^0_2)$.
Then we set $x_{A,1}(0)=x_{A,5}(0)=0,x_{A,2}(0)=x_{A,3}(0)=x_{A,4}(0)=1$,
$x_{B,i^0}(0)=0$ and  $x_{B,j}(0)=1$ for all $j\ne i^0, 1\le j\le m$. We set $x_{C,1}(0)=C^0_1,x_{C,2}(0)=C^0_2$. $x_{D,1}(0)=x_{D,2}(0)=0$.
Finally $x_{E,4i+2b+c}(0)=3, x_{F,4i+2b+c}(0)=4$ for all $1\le i\le m, b,c,\in \{0,1\}$. Namely, in the vector form
\begin{align}\label{eq:x0}
&x_A(0)=\left(
        \begin{array}{c}
          0 \\
          1 \\
          1 \\
          1 \\
          0 \\
        \end{array}
      \right),
~~
x_B(0)=\left(
        \begin{array}{c}
          1 \\
          \vdots \\
          1 \\
          0 \\
          1 \\
          1 \\
          \vdots \\
          1 \\
        \end{array}
      \right),
~~
x_C(0)=\left(
        \begin{array}{c}
          C^0_1 \\
          C^0_2 \\
        \end{array}
      \right),
      ~~
& x_D(0)=\left(
        \begin{array}{c}
          0 \\
          0 \\
        \end{array}
      \right),
      ~~
x_E(0)=\left(
        \begin{array}{c}
          3 \\
          3 \\
          \vdots \\
          3 \\
        \end{array}
      \right),
\\
&x_F(0)=\left(
        \begin{array}{c}
          4 \\
          4 \\
          \vdots \\
          4 \\
        \end{array}
      \right) \notag
\end{align}

The next theorem is our key technical result. It shows that the configuration of a Counter Machine at times $t=0,1,\ldots~$
can be encoded by a Skorokhod problem just constructed at times $5t, ~t=0,1,2,\ldots~.$
\begin{theorem}\label{theorem:counter-to-Skorokhod}
Given a Counter Machine $(S,\Gamma)$ with a starting configuration $(i^0,C^0_1,C^0_2)$
suppose its configuration at time $t$ is $(i^t,C^t_1,C^t_2), ~t=0,1,\ldots~$.
Then at time $5t$ the state $z(5t)$ of the corresponding Skorokhod problem is as follows:
$z_{A,1}(5t)=z_{A,5}(5t)=0,z_{A,2}(5t)=z_{A,3}(5t)=z_{A,4}(5t)=1$,
$z_{B,i^t}(5t)=0$ and $z_{B,i}(5t)=1$ for all $i\ne i^t$, $z_{C,k}(5t)=C^t_k, ~k=1,2$,
$z_{D,*}(5t)=0,z_{E,*}(5t)=3,z_{F,*}(5t)=4$. In particular, $z_{B,*}$ and $z_{C,*}$ encode the state $i^t$ and the counters $C^t_1,C^t_2$
of the Counter Machine at time $5t$, respectively.
\end{theorem}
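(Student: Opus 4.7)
\medskip

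\textbf{Proof plan.} The plan is to argue by induction on $t$. The base case $t=0$ is immediate from the specification of $z_0$ in (\ref{eq:x0}), since $z(0)=x(0)=z_0$ matches the claimed pattern with $i^t=i^0$ and $C^t_k=C^0_k$. The inductive step reduces to analyzing the dynamics of the Skorokhod problem on a single interval $[5t,5t+5]$ and showing that, starting from the claimed encoded state at time $5t$, the trajectory ends at time $5(t+1)$ in the state that encodes the configuration $(i^{t+1},C_1^{t+1},C_2^{t+1})$ produced by one application of $\Gamma$. Thus the entire proof is local: given the encoding at $5t$, reconstruct $(y(s),z(s))$ on $s\in[5t,5t+5]$ and verify it satisfies (\ref{eq:z=x+Ry})--(\ref{eq:non-idling}), that it is consistent with the updated encoding at $5t+5$, and that nothing else is possible.

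To handle one such interval, I would subdivide $[5t,5t+5]$ into the five unit sub-intervals $[5t+k,5t+k+1]$, $k=0,1,2,3,4$, and identify in each one precisely which coordinates of $y$ are active. The role of the $A$-block is to play a \emph{rotating five-phase clock}: because $\theta_{A,i}=-1$ for $i=1,\ldots,5$ and $AA$ is the cyclic rotation of the row vector $[1,2,1,1,0]$, the coordinates $z_{A,1},\ldots,z_{A,5}$ in turn are driven to zero, triggering the activation of $y_{A,k}$ at unit rate during sub-interval $k$ and restoring the pattern $z_A=(0,1,1,1,0)$ (cyclically shifted back to itself) at the end. In each phase, the negative entries in the $*A$ columns of the $B,C,D,E,F$ blocks transmit the clock pulse to the remaining blocks; simultaneously, any $z$-coordinate in $B,C,D,E,F$ that happens to sit at zero imposes its own non-idling activation via the identity diagonal blocks. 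I would tabulate, phase by phase, the set of zero coordinates, the active $y$-variables, and the resulting increments.

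The crucial mechanism that selects the correct Counter Machine transition is the index $4i+2b+c$ in the $E$ and $F$ blocks. By the inductive hypothesis, only $z_{B,i^t}=0$, so only the rows of $EB$ and $FB$ with a $-1$ in column $i^t$ can be pushed by $y_{B,i^t}$ once the $A$-clock ``releases'' them; of those $4m/m = 4$ rows, the entries $EC_{\cdot,1},EC_{\cdot,2}$ with coefficients $2b-1, 2c-1$ (together with the terms $-b-c$ in $EA_{\cdot,1}$) select the unique pair $(b,c)=(\mathbf{1}\{C_1^t>0\},\mathbf{1}\{C_2^t>0\})$ for which the zero-offset in the $E$-row is actually reached. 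Once this singleton row $4i^t+2b+c$ is activated, the non-zero entries of column $4i^t+2b+c$ in $BE$ and $CE$ drive the $B$-block to acquire its new zero at $z_{B,j}$ where $\Gamma(i^t,b,c)=(j,\Delta_1,\Delta_2)$, and drive $z_C$ by exactly $(\Delta_1,\Delta_2)$; the matrix $D$ and the redundancy between $E$ and $F$ (with $FA$ having the extra entries $-4,+4$) serve to keep these updates balanced and to restore $z_E=3,z_F=4$ at the end of the interval. Thus the encoded configuration at time $5(t+1)$ is exactly $(j,C_1^t+\Delta_1,C_2^t+\Delta_2)$, matching $\Gamma(i^t,b,c)$.

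The main obstacle is the bookkeeping in the inductive step: one must verify in each of the five phases that (i) the coordinates I claim to be zero really equal zero at the relevant times, (ii) the corresponding $y_j$'s, activated at unit rate by (\ref{eq:non-idling}), together with the drift $\theta$ and the matrix $R$, reproduce exactly the claimed trajectory via (\ref{eq:z=x+Ry}), and (iii) no other coordinate of $z$ is inadvertently driven to zero (which would force additional non-idling activations and break the claim). The proof thereby reduces to a finite case analysis over the five phases and the four possible values of $(b,c)$, with the cyclic structure of $AA$ and the concentric structure of the off-diagonal blocks ensuring that the accounting closes. I expect the verification that no spurious coordinates hit zero — in particular, that $z_E$ and $z_F$ stay strictly positive except at the single selected index, and that $z_D$ returns to zero at the phase boundaries without going negative in between — to be the most delicate part, and the reason for the specific constants $3$ and $4$ chosen in (\ref{eq:x0}) and the $-4,+4$ entries in $FA$.
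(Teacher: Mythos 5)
Your plan matches the paper's proof exactly in structure: induction on $t$, base case from the specification of $z_0$, decomposition of $[5t,5t+5]$ into five unit phases, the $A$-block as a five-phase clock driven by the cyclic rotation of $[1,2,1,1,0]$ (the paper's Lemma~\ref{lemma:xA dynamics}), the index $4i+2b+c$ as the selector of the single $E$-row corresponding to $(i^t,\mb{1}\{C_1^t>0\},\mb{1}\{C_2^t>0\})$, the $BE$/$CE$ columns writing the new state and counter increments, and the $F$-block restoring $z_E=3$ at phase 4. The paper performs the "tabulation" you postpone as Lemmas~\ref{lemma:5t 5t+1 positive counters}--\ref{lemma:5t+4 5t+5 positive counters}, one per unit interval.

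The one conceptual slip worth flagging: you write that "any $z$-coordinate in $B,C,D,E,F$ that happens to sit at zero imposes its own non-idling activation via the identity diagonal blocks." Condition~(\ref{eq:non-idling}) does not force $y_j$ to increase when $z_j=0$; it only permits it. Whether a given $y_j$ actually activates, and at what rate, is determined by whether $z_j$ would otherwise be driven negative by $\theta$ and the other active $y$-variables through row $j$ of $R$ — and establishing this, together with ruling out that any coordinate you want to stay positive ever touches zero, is the entire technical content of the five phase lemmas. For instance, in phase 1 the paper must argue that $y_{E,*}$ is passive \emph{before} it can conclude $y_{B,i^t}$ is active at \emph{exactly} unit rate (not less), a circular-looking dependency it resolves by a rate bound plus a case analysis on $(b,c)$; and in phase 4 the variable $y_{F,4i+2b+c}$ is active only on a sub-interval, at rate $4$ rather than $1$, which your clock picture does not yet accommodate. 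These are exactly the kinds of details that must be checked for the induction to close, and they are not automatic from the block structure alone.
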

In vector form the claimed state $z(5t)$ is thus described in (\ref{eq:x0}), where $x$ replaces $z$, the unique $0$
in the $B$-component of $z$ corresponds to index $i^t$, and $C^t_1,C^t_2$ replace $C^0_1,C^0_2$.

By itself this theorem does not prove our main result Theorem~\ref{theorem:mainresult}: it is easy to see that
$z(t)\neq 0$ for all $t$. Namely, our Skorokhod problem is not stable. In Section~\ref{section:wrapup} we construct
a modification of $R$ such that the halting state $i^*$ is reached if and only if the Skorokhod problem is stable.
Theorem~\ref{theorem:mainresult} then will follow from Theorem~\ref{theorem:counter_undecidable2}.

\begin{proof}
The remainder of this section is devoted to the proof of Theorem~\ref{theorem:counter-to-Skorokhod}.
The argument is based on induction. The base case is covered by our assumptions on the initial state $x(0),y(0),z(0)$.
For the induction part we simply carefully track
the dynamics of the Skorokhod problem over a time interval $[5t,5t+5]$ and show that if at time $5t$ the Skorokhod
problem encodes the configuration of the Counter Machine at time $t$, then it will also do so at the time $5t+5$
for the configuration at  time $t+1$.

We now provide details of the induction argument. We begin by separately analyzing the dynamics of $z_A(t)$. Then
we analyze the dynamics of the remaining part of the process $z(t)$ over intervals $[5t,5t+1],\ldots,[5t+4,5t+5]$ separately.

\subsection{Dynamics of $z_A(t)$}
As we now show the dynamics of $z_{A,*},y_{A,*}$ is periodic, does not depend on the dynamics of the Counter Machine
and does not depend on the dynamics of the rest of the vectors $z,y$, in some appropriate sense.
\begin{lemma}\label{lemma:xA dynamics}
For every $t\in\Z_+$ and $i\in \{1,2,3,4\}$, $z_{A,i}(5t+i)=z_{A,i+1}(5t+i)=0$ and $z_{A,j}(5t+i)=1$ for all $j\ne i,i+1$.
Also $z_{A,1}(5t)=z_{A,5}(5t)=0$, and $z_{A,j}(5t)=1$ for all $j\ne 1,5$. Namely, vector $z_A(t)$ cycles through the
following five vectors over the time instances $5t,5t+1,\ldots,5t+4$:
\begin{align}\label{eq:x01}
\left(\begin{array}{c}
          0 \\
          1 \\
          1 \\
          1 \\
          0 \\
        \end{array}
      \right)
~~
\left(\begin{array}{c}
          0 \\
          0 \\
          1 \\
          1 \\
          1 \\
        \end{array}
      \right)
~~
\left(\begin{array}{c}
          1 \\
          0 \\
          0 \\
          1 \\
          1 \\
        \end{array}
      \right)
~~
\left(\begin{array}{c}
          1 \\
          1 \\
          0 \\
          0 \\
          1 \\
        \end{array}
      \right)
~~
\left(\begin{array}{c}
          1 \\
          1 \\
          1 \\
          0 \\
          0 \\
        \end{array}
      \right)
\end{align}
Moreover, for every $t\in\Z_+$ and $i=0,1,2,3,4$, variable $y_{A,i+1}$ is active at the unit rate
in $(5t+i,5t+i+1)$, and is passive otherwise.  In particular, during the time period
$(5t+i,5t+i+1), ~0\le i\le 4$, the variable $y_{A,i+1}$ is the unique active variable among five variables $y_{A,*}$.
\end{lemma}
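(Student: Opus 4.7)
The first observation is a \textbf{decoupling}: the first five rows of $R$ have zero entries outside the first five columns, and the Skorokhod constraints (\ref{eq:non-idling}) for indices $1,\ldots,5$ involve only the $A$-coordinates of $z$ and $y$. Consequently $(z_A,y_A)$ satisfies a stand-alone $5$-dimensional Skorokhod problem driven by the linear path $x_A(t)=x_A(0)+\theta_A t$ with $\theta_A=(-1,-1,-1,-1,-1)^T$ and reflection matrix $AA$ from (\ref{eq:AA}), so the lemma can be proved in complete isolation from the groups $B,C,D,E,F$.

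The plan is to induct on $t$, the base case $t=0$ being read off from the initial data (\ref{eq:x0}). For the inductive step, assuming $z_A(5t)=(0,1,1,1,0)^T$, I would partition $[5t,5t+5]$ into five unit subintervals and show that on $[5t+i,5t+i+1]$ the only $A$-variable that increases is $y_{A,i+1}$, and that it does so at unit rate. Because every row of $AA$ is a cyclic shift of $[1,2,1,1,0]$, the matrix is invariant under the simultaneous cyclic relabeling $j\mapsto j+1\pmod 5$ of rows and columns. Since the claimed state at time $5t+i+1$ is exactly the claimed state at time $5t+i$ transported by that same cyclic relabeling, once the first subinterval $[5t,5t+1]$ is analyzed the remaining four follow from the same argument with indices rotated, and at the end of the fifth subinterval the state closes up to $(0,1,1,1,0)^T$, giving the inductive step.

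The heart of the induction is thus the analysis of $[5t,5t+1]$. By continuity of $z_A$, the coordinates $z_{A,2},z_{A,3},z_{A,4}$ stay strictly positive on some $(5t,5t+\epsilon)$, so (\ref{eq:non-idling}) forces $y_{A,2},y_{A,3},y_{A,4}$ to be constant there; only $y_{A,1}$ and $y_{A,5}$ can increase, at (a.e.) rates $r_1,r_5\ge 0$. Since $AA_{1,5}=0$, the drift of $z_{A,1}$ equals $-1+r_1$, so non-negativity of $z$ forces $r_1\ge 1$, and the Stieltjes condition (which forbids $z_{A,1}$ from rising above zero while $y_{A,1}$ is strictly increasing) forces $r_1=1$ exactly. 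Plugging $r_1=1$ into the fifth coordinate and using $AA_{5,1}=2$ gives $\dot z_{A,5}=1+r_5>0$ for every $r_5\ge 0$, so $z_{A,5}$ leaves the boundary immediately and (\ref{eq:non-idling}) pins $r_5=0$. The resulting drift vector $(0,-1,0,0,1)^T$ then drives $z_A$ linearly to $(0,0,1,1,1)^T$, with $z_{A,2}$ reaching zero precisely at $t=5t+1$, at which point the cyclically shifted version of the same argument takes over.

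The one step I expect to be the most delicate is this uniqueness argument excluding simultaneous activation of $y_{A,1}$ and $y_{A,5}$ at the start of each subinterval: it really uses the specific entries $AA_{1,5}=0$ and $AA_{5,1}=2$, not merely qualitative features of $AA$, and it is presumably the reason the matrix was chosen in the particular circulant form (\ref{eq:AA}). Once that rigidity is in hand, the cyclic symmetry and the induction close the proof with no further surprises, and the assertion about which single $y_{A,*}$ is active on each unit subinterval appears as a direct byproduct of the same analysis.
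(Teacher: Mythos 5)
Your proof is correct and follows essentially the same approach as the paper: decouple the $A$-block, analyze one unit subinterval by showing $y_{A,2},y_{A,3},y_{A,4}$ are passive, use $AA_{1,5}=0$ to pin $y_{A,1}$ to unit rate, use $AA_{5,1}=2$ to show $z_{A,5}$ lifts off so $y_{A,5}$ is passive, and handle the remaining four subintervals by the cyclic symmetry of $AA$. The only cosmetic difference is that the paper directly bounds $z_{A,k}(s)\ge 1-s>0$ for $s\in(0,1)$ using monotonicity of $y$, whereas you invoke continuity on a small $(5t,5t+\epsilon)$ and then extend; both close the same way.
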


\begin{proof}
Observe that no  variables other than $y_{A,*}$ influence $z_{A,*}$. Thus it suffices to establish the result
for $t=0$ and demonstrate the periodicity property $z_A(5)=z_A(0)$. First consider time period $[0,1]$.
Recall that $z_{A,1}=z_{A,5}=0$ and $z_{A,i}=1$ for $i\ne 1,5$. From (\ref{eq:z=x+Ry}) we have for every $s\in (0,1)$
\begin{align*}
z_{A,2}(s)&=z_{A,2}(0)-\theta_{A,2}s+y_{A,2}(s)-y_{A,2}(0)+2(y_{A,3}(s)-y_{A,3}(0))+y_{A,4}(s)-y_{A,4}(0)\\
&+y_{A,5}(s)-y_{A,5}(0) \\
&\ge 1-s \\
&>0,
\end{align*}
where the non-decreasing property (\ref{eq:yincreasing}) of $y_j$ was used. The constraint (\ref{eq:non-idling}) then
implies that $y_{A,2}$ remains passive in the interval $(0,1)$. Similarly we show that variables $y_{A,3}$ and $y_{A,4}$
remain passive over the same time interval.
Applying again (\ref{eq:z=x+Ry}) we obtain for every $s\in (0,1)$
\begin{align*}
z_{A,1}(s)&=0-s+y_{A,1}(s)-y_{A,1}(0), \\
z_{A,5}(s)&=0-s+2(y_{A,1}(s)-y_{A,1}(0))+y_{A,5}(s)-y_{A,5}(0).
\end{align*}
By (\ref{eq:non-idling}) and non-negativity of $z$,
the first identity implies that
$y_{A,1}(s)=s$. Applying this to the second identity we obtain that $z_{A,5}$ becomes strictly positive for $s>0$ and therefore
the variable $y_{A,5}$ is passive over the interval $(0,1)$.
We conclude that for $s\in (0,1)$, $z_{A,2}(s)=1-s,  z_{A,3}(s)=1-s+s=1, z_{A,4}(s)=1-s+s=1,z_{A,5}(s)=0-s+2s$. In particular,
$z_{A,1}(1)=z_{A,2}(1)=0, z_{A,3}(1)=z_{A,4}(1)=z_{A,5}(1)=1$. This establishes the claim for $s\in (0,1)$. The proof for the cases
$s\in (1,2),(2,3),(3,4),(4,5)$ is obtained by shifting the indices by one.
\end{proof}

\subsection{Interval $[5t,5t+1]$}
In this and the remaining  subsections we will say that a variable $y_j$ is active when it is active at the unit rate.
When a variable is active at a non-unit rate we will explicitly say this.
For every $i=1,2,\ldots,m$ and $b,c=0,1$ let
\begin{align}\label{eq:Upsilon}
\Upsilon(i,b,c)=-b-c+(2b-1)\delta(C^t_1=0)+(2c-1)\delta(C^t_2=0)-\delta(i=i^t).
\end{align}

\begin{lemma}\label{lemma:5t 5t+1 positive counters}
In the time period $(5t,5t+1)$ variables $y_{A,1},y_{B,i^t}$ are active.
Variable $y_{C,i}, i=1,2$ is active
iff $C^t_i=0$. All the remaining $y$ variables are passive.
The state $z(5t+1)$ is as follows:  $z_{B,*}(5t+1)=0$,
$z_{C,i}(5t+1)=C^t_i-1+\delta(C^t_i=0), ~i=1,2$, $z_{D,i}(5t+1)=\delta(C^t_i=0),~ i=1,2$,
$z_{E,4i+2b+c}(5t+1)=3+\Upsilon(i,b,c)$ for all $i=1,\ldots,m$, and
$z_{F,4i+2b+c}(5t+1)=4+\Upsilon(i,b,c)$ for all $i=1,\ldots,m$ and $b,c=0,1$.
Namely, in the vector form

\begin{align*}
& z_B(5t+1)=\left(
        \begin{array}{c}
          0 \\
          0 \\
          \vdots \\
          0 \\
        \end{array}
      \right)
\qquad z_C(5t+1)=\left(
        \begin{array}{c}
          C^t_1-1+\delta(C^t_1=0) \\
          C^t_2-1+\delta(C^t_2=0) \\
        \end{array}
      \right)
\qquad z_D(5t+1)=\left(
        \begin{array}{c}
          \delta(C^t_1=0) \\
          \delta(C^t_2=0) \\
        \end{array}
      \right)
\\
& z_E(5t+1)=\left(
        \begin{array}{c}
          \vdots \\
          3+\Upsilon(i,b,c) \\
          \vdots \\
        \end{array}
      \right)\\
& z_F(5t+1)=\left(
        \begin{array}{c}
          \begin{array}{c}
          \vdots \\
          4+\Upsilon(i,b,c)\\
          \vdots \\
        \end{array}
        \end{array}
      \right)
      &
\end{align*}
\end{lemma}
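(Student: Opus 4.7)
The plan is to trace the Skorokhod dynamics over $(5t, 5t+1)$ block by block, using the induction hypothesis for $z(5t)$ and Lemma~\ref{lemma:xA dynamics}. The latter says that only $y_{A,1}$ is active during this interval, at unit rate. Because $\theta_X = 0$ for $X \in \{B,C,D,E,F\}$ and the only coupling from $A$ to the non-$A$ blocks in $R$ is through the first columns of $BA, CA, DA, EA, FA$, reading these off gives an effective drift per unit time of $-1$ on each $z_{B,j}$, $-1$ on each $z_{C,k}$, $0$ on each $z_{D,k}$, and $-b-c$ on $z_{E,4i+2b+c}$ and $z_{F,4i+2b+c}$.

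With those pushes in hand, I determine the activations of $y_B$ and $y_C$ from the induction hypothesis combined with the complementarity condition (\ref{eq:non-idling}). Since $z_{B,i^t}(5t) = 0$ receives a $-1$ push, $y_{B,i^t}$ must be active at unit rate, pinning $z_{B,i^t} \equiv 0$; for $j \ne i^t$, $z_{B,j}(5t) = 1$ absorbs the push linearly down to $0$ at $s = 5t+1$, so $y_{B,j}$ stays passive. Similarly $y_{C,k}$ is active at unit rate iff $C^t_k = 0$: when $C^t_k \ge 1$, $z_{C,k}$ decreases from $C^t_k$ to $C^t_k - 1$ and $y_{C,k}$ remains passive; when $C^t_k = 0$, activation of $y_{C,k}$ pins $z_{C,k} \equiv 0$ and its unit increase feeds into $z_{D,k}$ through the $I$ block, yielding $z_{D,k}(5t+1) = 1$.

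Finally I posit $y_D, y_E, y_F$ passive and verify. For $y_D$, $z_{D,k}$ has zero $y_{A,1}$-push and only non-negative contributions from $y_{C,k}$, so $z_{D,k} \ge 0$ automatically and complementarity forces $y_{D,k}$ passive. For $y_E, y_F$, substituting the activations into the Skorokhod equation yields the linear trajectories $z_{E,4i+2b+c}(s) = 3 + \Upsilon(i,b,c)(s-5t)$ and $z_{F,4i+2b+c}(s) = 4 + \Upsilon(i,b,c)(s-5t)$ with $\Upsilon$ as in (\ref{eq:Upsilon}); a case check over the four values of $(b,c)$ shows $\Upsilon(i,b,c) \in [-3,0]$, so $z_E \ge 0$ and $z_F \ge 1$ throughout the interval, confirming the posited passivity and giving the claimed endpoint values. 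The main obstacle is the guess-and-verify nature of the complementarity constraint: the active set cannot be prescribed a priori. It succeeds here because the starting values $3$ and $4$ on $z_E, z_F$ are large enough to absorb the worst-case cumulative push from any combination of $y_A, y_B, y_C$ activity, which decouples those blocks and reduces the verification to the single bound $\Upsilon \ge -3$.
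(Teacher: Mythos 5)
Your proof is correct and follows essentially the same route as the paper's: use Lemma~\ref{lemma:xA dynamics} to isolate $y_{A,1}$ as the only active $A$-variable, determine $y_B,y_C$ activity from complementarity and the induction hypothesis, and confirm $y_D,y_E,y_F$ passive via the lower bound $\Upsilon\ge -3$, so that $z_E\ge 3-3s$ and $z_F\ge 4-3s$ stay positive on the open interval. One small caution on ordering: before $y_{E,*}$ is known to be passive you can only conclude that $y_{B,i^t}$ and the relevant $y_{C,k}$ increase at rate \emph{at most} unity (since $BE,CE\ge 0$ could in principle admit a slower rate compensated by $y_E$); the paper is careful to derive the at-most-unit bound first, use it to pin down $z_E,z_F>0$, and only then upgrade to exact unit rate, whereas your phrasing asserts unit rate immediately --- the fix is trivial and your "posit-and-verify" framing already contains it, but it is the one place where the forcing argument needs the two-pass structure.
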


\begin{proof}
We begin by identifying the active variables. Among $y_{A,i}$ we already established in Lemma~\ref{lemma:xA dynamics} that only $y_{A,1}$ is active.
Consider now variables $y_{B,i}$ for $i\ne i^t$. Since by our inductive assumption
$z_{B,i}(5t)=1$, since $BA_{i,1}=-1$ and all the remaining entries of rows corresponding to $B$
are non-negative, then $z_{B,i}$ remain positive in $(5t,5t+1)$ and therefore, all $y_{B,i}$ remain passive during this time interval.
As for the variable $z_{B,i^t}$ which equals zero at time $t=0$, we have for $s\in (0,1)$
\begin{align*}
z_{B,i^t}(5t+s)&=-(y_{A,1}(5t+s)-y_{A,1}(5t))+(y_{B,i^t}(5t+s)-y_{B,i^t}(5t))\\
&+\sum_{1\le j\le 4m}BE_{i^t,j}(y_{E,j}(5t+s)-y_{E,j}(5t)).
\end{align*}
Since  $BE\ge 0$, and the rate of $y_{A,1}$ is unity, we see $y_{B,i^t}(s)$ increases at the rate at most unity.
Later on we will show that in fact $y_{E,*}$ remain passive and therefore $y_{B,i^t}(s)$ increases precisely at the unit rate.

Consider variables $y_{C,i}, i=1,2$. We have for $s\in (0,1)$ by our inductive assumption
\begin{align}
z_{C,i}(5t+s)&=C^t_i-(y_{A,1}(5t+s)-y_{A,1}(5t))+(y_{C,i}(5t+s)-y_{C,i}(5t)) \label{eq:zC5t}\\
&+(y_{D,i}(5t+s)-y_{D,i}(5t))\\
&+\sum_{1\le j\le 4m}CE_{i,j}(y_{E,j}(5t+s)-y_{E,j}(5t)). \notag
\end{align}
Recall that the entries of $CE$ are non-negative. The rate of $y_{A,1}$ is unity. If $C^t_i\ge 1$, this implies
that variables $z_{C,i}$ remains positive on $[5t,5t+1)$ and therefore $y_{C,i}$ is passive. If $C^t_i=0$, then, as in the case
of $y_{B,i^t}$ variable, $y_{C,i}$ can be active at the rate at most unity. When we show that $y_{D,*}$ and $y_{E,*}$ are passive, this will
imply that $y_{C,i}$ is active at the unit rate.

Consider variables $y_{D,i}$. The only negative entries in rows corresponding to $D$ are $DA_{i,2}$. However, $y_{A,2}$ is passive. Thus
$y_{D,i}, i=1,2$ remain passive.

Consider variables $y_{E,*}$. We have established that $y_{B,i}$ are passive for $i\neq i^t$,  $y_{C,i}$ are passive unless $C^t_i=0$ and $y_{D,*}$ are passive.
The entries of $EE$ and $EF$ are non-negative. Therefore
\begin{align}
z_{E,4i+2b+c}(5t+s)&\ge 3-(b+c)(y_{A,1}(5t+s)-y_{A,1}(5t)) \label{eq:zE5t}\\
&-\delta(i=i^t)(y_{B,i^t}(5t+s)-y_{B,i^t}(5t)) \notag\\
&+\delta(C^t_1=0)(2b-1)(y_{C,1}(5t+s)-y_{C,1}(5t))\notag\\
&+\delta(C^t_2=0)(2c-1)(y_{C,2}(5t+s)-y_{C,2}(5t)). \notag
\end{align}
Let us verify case by case that $z_{E,4i+2b+c}$ remains positive in $[5t,5t+1)$. We know that $y_{A,1}$ increases at the unit rate.
We have already established that $y_{B,i^t},y_{C,i}$ increase at the rate at most unity. If $b=c=0$, giving $2b-1=2c-1=-1$ we  obtain
$z_{E,4i+2b+c}(5t+s)\ge 3-(1+1+1)s>0$. If $b=1,c=0$, we again obtain $z_{E,4i+2b+c}(5t+s)\ge 3-(1+1+1)s>0$. Similarly when $b=0,c=1$.
Finally $b=c=1$, we also obtain $z_{E,4i+2b+c}(5t+s)\ge 3-3s>0$. We have established  that $z_{E,4i+2b+c}$ remains positive.
This implies that $y_{E,*}$ remain passive. This further implies that $y_{B,i^t}$ is active at unit rate and $y_{C,i}$ is
also active at unit rate if $C^t_i=0$.

The case of $y_{F,*}$ variables is similar to the one of $y_{E,*}$ in light of the fact that at time zero these variables have value $4$
as opposed variables $y_{E,*}$ which have value $3$.
We conclude that the only active variables are $y_{A,1}$, $y_{B,*}$ and $y_{C,i}$ when $C^t_i=0$, and they increase at a unit rate.

It is now easy to identify the state $z(5t+1)$. For $s\in (0,1)$ we have $z_{B,i}(5t+s)=1-(y_{A,1}(5t+s)-y_{A,1}(5t))=1-s$ for $i\ne i^t$;
$z_{B,i^t}(5t+s)=0-(y_{A,1}(5t+s)-y_{A,1}(5t))+(y_{B,i^t}(5t+s)-y_{B,i^t}(5t))=0$. Therefore $z_{B,*}(5t+1)=0$.
We may rewrite (\ref{eq:zC5t}) as
\begin{align*}
z_{C,i}(5t+s)=C^t_i-s+(y_{C,i}(5t+s)-y_{C,i}(5t)).
\end{align*}
We obtain $z_{C,i}=C^t_i-1$ when $C^t_i\ge 1$ and $=0$ otherwise. We can write this as $z_{C,i}=C^t_i-1+\delta(C^t_i=0)$.

For $z_{D,*}$ variables we have
\begin{align*}
z_{D,i}(5t+s)=y_{C,i}(5t+s)-y_{C,i}(5t).
\end{align*}
Thus $z_{D,i}(5t+1)=1$ if $y_{C,i}$ is active, namely if $C^t_i=0$ and $=0$ otherwise.
In short, $z_{D,i}(5t+1)=\delta(C^t_i=0)$.
Rewrite (\ref{eq:zE5t}) as
\begin{align*}
z_{E,4i+2b+c}(5t+s)&= 3-(b+c)s-\delta(i=i^t)s
+\delta(C^t_1=0)(2b-1)s+\delta(C^t_2=0)(2c-1)s.
\end{align*}
Evaluating at  $s=1$ we obtain $z_{E,*}(5t+1)=3+\Upsilon(i,b,c)$.

Finally, we have
\begin{align*}
z_{F,4i+2b+c}(5t+s)&= 4-(b+c)s-\delta(i=i^t)s
+\delta(C^t_1=0)(2b-1)s+\delta(C^t_2=0)(2c-1)s.
\end{align*}
Evaluating at $s=1$ we obtain  $z_{F,*}(5t+1)=4+\Upsilon(i,b,c)$.
\end{proof}

\subsection{Interval $[5t+1,5t+2]$}
We will show that the only change occurring in this time interval is increasing the values of $z_{C,1},z_{C,2}$ by one.
\begin{lemma}\label{lemma:5t+1 5t+2 positive counters}
In the time interval $(5t+1,5t+2)$ variable $y_{A,2}$ is active. $y_{D,i}$ is  active if and only if $C^t_i>0$ for $i=1,2$.
All the remaining $y$ variables are passive.
Moreover,
$z_{C,i}(5t+2)=C^t_i,~i=1,2$, and  $z_{X,*}(5t+2)=z_{X,*}(5t+1)$ for $X=A,B,D,E,F$.
\end{lemma}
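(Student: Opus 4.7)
The plan is to trace the Skorokhod dynamics on the interval $(5t+1, 5t+2)$, using Lemma~\ref{lemma:xA dynamics} to pin down the $A$-block activity and then propagating the consequences through the other blocks. By Lemma~\ref{lemma:xA dynamics}, the unique active $y_{A,*}$ variable in this interval is $y_{A,2}$, increasing at unit rate. The key structural observation is that column $2$ of every $A$-sub-block of $R$ vanishes outside of $D$: one has $BA_{*,2} = 0$, $CA_{*,2} = 0$, $EA_{*,2} = 0$, $FA_{*,2} = 0$, while $DA_{*,2} = (-1,-1)^T$. Thus $y_{A,2}$ exerts a unit downward drift only on $z_{D,1}$ and $z_{D,2}$, leaving the drifts on all coordinates of $z_B, z_C, z_E, z_F$ coming only from the remaining $y$-variables.

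I then split into cases for the $D$-block using the initial data $z_{D,i}(5t+1) = \delta(C^t_i = 0)$ from Lemma~\ref{lemma:5t 5t+1 positive counters}. If $C^t_i > 0$, then $z_{D,i}(5t+1) = 0$, so the nonnegativity constraint together with (\ref{eq:non-idling}) and $R_{D,D} = I$ forces $y_{D,i}$ to be active at unit rate in order to cancel the unit downward drift from $y_{A,2}$; consequently $z_{D,i}$ remains at $0$ throughout. If $C^t_i = 0$, then $z_{D,i}(5t+1) = 1$, so $z_{D,i}$ stays strictly positive on the open interval, hence (\ref{eq:non-idling}) forces $y_{D,i}$ passive, and $z_{D,i}$ decreases linearly from $1$ to $0$. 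Either way $y_{D,i}$ is active iff $C^t_i > 0$, as claimed. Propagating this into the $C$-block via $R_{C,D} = I$, and using that $CA_{*,2} = 0$ together with (the still-to-verify) passivity of $y_{E,*}$, the coordinate $z_{C,i}$ either rises from $C^t_i - 1$ to $C^t_i$ (if $C^t_i > 0$) or stays at $0$ (if $C^t_i = 0$); in both cases $z_{C,i}(5t+2) = C^t_i$ and $y_{C,i}$ stays passive.

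Finally, to close the loop I verify that $y_{B,*}, y_{E,*}, y_{F,*}$ are passive: the relevant rows pick up no downward drift because $BA_{*,2} = EA_{*,2} = FA_{*,2} = 0$ and $R_{B,D} = R_{E,D} = R_{F,D} = 0$ (so the active $y_{D,*}$ contributes nothing), and the remaining couplings $R_{B,E} = BE \ge 0$, $R_{E,E} = R_{E,F} = I$, $R_{F,E} = R_{F,F} = I$ involve only the passive guesses themselves; with $R_{B,B}, R_{E,E}, R_{F,F}$ all equal to $I$, complementarity (\ref{eq:non-idling}) is satisfied by taking these variables passive and the corresponding $z$-coordinates constant. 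The only subtle point is the self-consistency of this passivity guess, but it is not circular: the argument for each group only relies on the absence of downward drift (a matrix-structure fact), so the trivial solution is consistent. The main obstacle is therefore just the careful bookkeeping of which blocks of $R$ matter for each coordinate; I note that the stated conclusion $z_{X,*}(5t+2) = z_{X,*}(5t+1)$ for $X = A, B, D, E, F$ should be read as holding cleanly for $X = B, E, F$, with $z_A(5t+2)$ governed by Lemma~\ref{lemma:xA dynamics} and $z_{D,i}(5t+2) = 0$ as derived above.
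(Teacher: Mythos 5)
Your key structural observation---that column $2$ of every $XA$ block vanishes except $DA_{*,2}=(-1,-1)^T$, so $y_{A,2}$ drags down only the $D$-coordinates---is exactly the right insight, and it matches the fact the paper exploits. But the order in which you then determine the blocks has a genuine circularity that the paper avoids.

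You analyze $D$ first, writing (in effect) $z_{D,i}(5t+1+s)=\delta(C_i^t=0)-s+\bigl(y_{D,i}\text{ increment}\bigr)$ and concluding $y_{D,i}$ must be active when $C_i^t>0$. But $R_{D,C}=I$, so the true equation also contains a nonnegative term $y_{C,i}(5t+1+s)-y_{C,i}(5t+1)$; silently dropping it is tantamount to assuming $y_{C,i}$ is passive, which you have not yet shown at that point. If $y_{C,i}$ were allowed to be active, it could supply the upward push instead and $y_{D,i}$ would \emph{not} be forced active, so the phrase ``forces $y_{D,i}$ to be active at unit rate in order to cancel the unit downward drift'' does not follow from what is on the table. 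You then use this tentative $D$-behavior to determine $C$ (``propagating this into the $C$-block via $R_{C,D}=I$''), which is where the circle closes. The ``self-consistency of the passivity guess'' remark at the end does not repair this: exhibiting one internally consistent $(y,z)$ is not the same as showing the dynamics \emph{must} be as claimed, which is what the lemma asserts (and what Definition~1, quantifying over \emph{every} solution, requires in spirit). Moreover your assertion that ``the argument for each group only relies on the absence of downward drift'' is false precisely for $D$, which is the one block that does have a downward drift in this interval.

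The fix is to reverse the order, as the paper does. Establish $y_{B,*}$ and $y_{C,*}$ passive \emph{directly} from the matrix structure: all negative entries in the $B$-rows lie in column $A1$ and all negative entries in the $C$-rows lie in columns $A1,A4$, and $y_{A,1},y_{A,4}$ are passive on $(5t+1,5t+2)$ by Lemma~\ref{lemma:xA dynamics}; the couplings to $D,E$ are via $0$, $I$, $BE\ge 0$, $CE\ge 0$, so $z_{B,*}$ and $z_{C,*}$ have no downward drift and complementarity (\ref{eq:non-idling}) then forces $y_{B,*},y_{C,*}$ passive regardless of whether the corresponding $z$-entries start at zero. With $y_{C,*}$ now known to be passive, the $z_D$ equation simplifies to the one you wrote and your case analysis on $\delta(C_i^t=0)$ is then correct; the determination of $y_{E,*},y_{F,*}$ and the evaluation of $z(5t+2)$ go through as you sketched. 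Your final remark that the conclusion $z_{X,*}(5t+2)=z_{X,*}(5t+1)$ needs care for $X=A,D$ is right but moot: $z_A(5t+2)=z_A(5t+1)$ is exactly what Lemma~\ref{lemma:xA dynamics} gives at the two endpoints of the interval, and $z_{D,i}(5t+2)=0=z_{D,i}(5t+1)$ does hold when $C_i^t>0$ but \emph{not} when $C_i^t=0$ (then $z_{D,i}$ drops from $1$ to $0$); the lemma's claim ``$z_{X,*}(5t+2)=z_{X,*}(5t+1)$ for $X=A,B,D,E,F$'' should indeed be read as excluding $D$ when $C_i^t=0$, and the inductive invariant the paper actually uses at time $5t+2$ is $z_{D,*}(5t+2)=0$.
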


\begin{proof}
The only negative entries in $R$ in rows corresponding to  $B$ are $BA_{1,*}$. By Lemma~\ref{lemma:xA dynamics} $y_{A,1}$ is passive.
Thus variables $y_{B,*}$ remain passive. The same applies to variables $y_{C,*}$.
We skip the analysis of $y_{D,*}$ for a moment. Observe that $y_{E,*},y_{F,*}$ remain passive
since by Lemma~\ref{lemma:xA dynamics} $y_{A,1}$ and $y_{A,3}$ are passive, and as we have established above, $y_{B,*}$ are passive.
It remains to analyze $y_{D,*}$. We obtain for $s\in (0,1)$
\begin{align*}
z_{D,i}(5t+1+s)&=z_{D,i}(5t+1)-(y_{A,2}(5t+1+s)-y_{A,2}(5t+1))+(y_{D,i}(5t+1+s)-y_{D,i}(5t+1))\\
&=\delta(C^t_i=0)-s+(y_{D,i}(5t+1+s)-y_{D,i}(5t+1)),
\end{align*}
which immediately implies that $y_{D,i}$ is active if $C^t_i>0$ and passive otherwise.

It is easy now to identify the state at time $5t+2$. By direct inspection we obtain that
for $s\in (0,1)$
\begin{align*}
z_{C,i}(5t+1+s)&=z_{C,i}(5t+1)+y_{D,i}(5t+1+s)-y_{D,i}(5t+1)\\
&=C^t_i-1+\delta(C^t_i=0)+s\delta(C^t_i>0) \\
&=C^t_i.
\end{align*}
All the remaining variables $z_{*}$ remain the same. This concludes the proof.
\end{proof}

\subsection{Interval $[5t+2,5t+3]$}

\begin{lemma}\label{lemma:5t+2 5t+3 positive counters}
In the time interval $(5t+2,5t+3)$ the variable $y_{A,3}$ is active and variable $y_{E,4i^t+2b+c}$ is active
for $b=\min(C^t_1,1),c=\min(C^t_2,1)$. All the remaining  variables are passive.
Moreover,  at time $5t+3$ the state $z(5t+3)$ is as follows:  $z_{B,i^{t+1}}(5t+3)=0$ and $z_{B,i}(5t+3)=1$ for $i\ne i^{t+1}$.
$z_{C,i}(5t+3)=C^t_i+\Delta_i+1, i=1,2$. $z_{D,i}(5t+3)=0,~ i=1,2$. Finally,
$z_{E,4i+2b+c}(5t+3)=3+\Upsilon(i,b,c)+\mb{1}\{(i,b,c)=(i^t,b^t,c^t)\}$ and
$z_{F,4i+2b+c}(5t+3)=4+\Upsilon(i,b,c)+\mb{1}\{(i,b,c)=(i^t,b^t,c^t)\}$.
Namely, in the vector form

\begin{align*}
& z_B(5t+3)=\left(
        \begin{array}{c}
          1 \\
          1 \\
          \vdots \\
          1\\
          0\\
          1\\
          \vdots\\
          1 \\
        \end{array}
      \right)
\qquad z_C(5t+3)=\left(
        \begin{array}{c}
          C^t_1+\Delta_1+1 \\
          C^t_2+\Delta_2+1 \\
        \end{array}
      \right)
\qquad z_D(5t+3)=\left(
        \begin{array}{c}
          0 \\
          0 \\
        \end{array}
      \right)
\\
& z_E(5t+3)=\left(
        \begin{array}{c}
          \vdots \\
          2+\Upsilon(i,b,c))+\mb{1}\{(i,b,c)=(i^t,b^t,c^t)\} \\
          \vdots \\
        \end{array}
      \right)\\
& z_F(5t+3)=\left(
        \begin{array}{c}
          \begin{array}{c}
          \vdots \\
          3+\Upsilon(i,b,c)+\mb{1}\{(i,b,c)=(i^t,b^t,c^t)\}\\
          \vdots \\
        \end{array}
        \end{array}
      \right).
\end{align*}

\end{lemma}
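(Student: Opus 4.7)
The plan is to follow the same induction template as in Lemmas~\ref{lemma:5t 5t+1 positive counters} and~\ref{lemma:5t+1 5t+2 positive counters}. First I would invoke Lemma~\ref{lemma:xA dynamics} to conclude that on the interval $(5t+2,5t+3)$ the only active $y_A$ variable is $y_{A,3}$, at unit rate, while $y_{A,1},y_{A,2},y_{A,4},y_{A,5}$ are passive; this is the only source of downward drift for the $B,C,D,E,F$ blocks. I would then import the state $z(5t+2)$ from the previous lemma and walk through the remaining blocks one at a time, using~\eqref{eq:z=x+Ry} together with the complementarity constraint~\eqref{eq:non-idling} to decide which of the non-$y_A$ variables must be active.

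The central technical step is a case analysis on the quantity $\Upsilon(i,b,c)$ defined in~\eqref{eq:Upsilon}. With $b^t=\min(C_1^t,1)$ and $c^t=\min(C_2^t,1)$, checking the four cases $(b,c)\in\{0,1\}^2$ against the four possibilities for $(\delta(C_1^t=0),\delta(C_2^t=0))$ shows that $\Upsilon(i,b,c)\ge -3$ in every case, with equality precisely when $(i,b,c)=(i^t,b^t,c^t)$. Hence $z_{E,4i+2b+c}(5t+2)=3+\Upsilon(i,b,c)$ vanishes at a single coordinate, namely the one that simultaneously encodes the current state $i^t$ and the sign pattern of the counters, and is at least $1$ everywhere else. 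Since the only negative entry of $R$ in the $E$-rows corresponding to an active column in this interval is $EA_{\ast,3}=-1$, the coordinates where $z_E\ge 1$ stay strictly positive on $[5t+2,5t+3)$ and their $y_E$ stays passive, while the unique zero coordinate is forced to reflect, making $y_{E,4i^t+2b^t+c^t}$ active at unit rate.

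Having isolated the unique active $E$-variable, the remaining blocks are checked to be passive. For $y_B$, the column $BE_{\ast,4i^t+2b^t+c^t}$ is designed to encode $\Gamma$: it is $0$ at row $i^{t+1}$ and $1$ everywhere else, so the active $y_E$ pushes every $z_{B,j}$ with $j\ne i^{t+1}$ strictly upward while leaving $z_{B,i^{t+1}}$ at $0$, which is consistent with all $y_{B,\ast}$ passive. For $y_C$, the negative entries $CA_{\ast,1}$ and $CA_{\ast,4}$ multiply the passive variables $y_{A,1}$ and $y_{A,4}$, while $CE_{i,4i^t+2b^t+c^t}=\Delta_i+1$ is nonnegative by the consistency assumption on $\Gamma$, so $z_C$ stays nonnegative and $y_C$ is passive. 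For $y_D$, the only negative entry $DA_{\ast,2}$ multiplies the passive $y_{A,2}$ and $z_D(5t+2)=0$, so $z_D\equiv 0$ on the interval. For $y_F$, the initial value $4+\Upsilon\ge 1$ combined with the compensating $+1$ from the active $y_E$ at the special coordinate keeps every $z_F$ strictly positive, so all $y_F$ are passive. The claimed values of $z(5t+3)$ then follow by direct substitution into~\eqref{eq:z=x+Ry}. The main obstacle is the $\Upsilon$ case analysis: it is the step that actually implements the transition $\Gamma(i^t,b^t,c^t)=(i^{t+1},\Delta_1,\Delta_2)$ of the Counter Machine inside the Skorokhod dynamics by selecting the unique column of $R$ that fires, and everything else in the proof is a local and essentially mechanical consequence once that selection is made.
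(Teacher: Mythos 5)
Your proposal follows the same road map as the paper: use Lemma~\ref{lemma:xA dynamics} to pin $y_{A,3}$ as the only active $A$-variable, import $z(5t+2)$ from the preceding lemma, and then run the case analysis on $\Upsilon(i,b,c)$ to find that $4i^t+2b^t+c^t$ is the unique coordinate where $z_E(5t+2)=3+\Upsilon$ vanishes, forcing the corresponding $y_E$ to reflect. One ordering detail worth tightening: before declaring $y_{E,4i^t+2b^t+c^t}$ active at unit rate, one must rule out that the compensating upward push on $z_E$ at that coordinate comes from $y_F$ through the block $EF=I$; the paper deliberately establishes $y_{F,*}$ passive (from $z_F(5t+2)=4+\Upsilon\ge 1$ and the drift rate being at most one) \emph{before} concluding which $y_E$ is active, whereas your last paragraph justifies $y_F$ passive by invoking the ``compensating $+1$ from the active $y_E$'', which is slightly circular as written. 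The loop dissolves immediately once you note that $z_F(5t+2+s)\ge 1-s>0$ holds unconditionally, so this is a presentational rather than a substantive gap.
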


\begin{proof}
By non-negativity of $BE$ and $CE$ sub-matrices and since $y_{A,3}$ is the only active among $A$ variables, we verify directly
that $y_{B,*},y_{C,*},y_{D,*}$ remain passive. Now consider variables $y_{E,4i+2b+c}$ with $i\ne i^t$. By
Lemmas~\ref{lemma:5t 5t+1 positive counters},\ref{lemma:5t+1 5t+2 positive counters},  we have
$z_{E,4i+2b+c}(5t+2)=3-b-c+(2b-1)\delta(C^t_1=0)+(2c-1)\delta(C^t_2=0)$ for $i\ne i^t$.
Observe that this quantity is always at least a unity.
Since variables $y_{B,*},y_{C,*}$ are passive, by inspecting rows $E$ of matrix $R$ we obtain that for all $s\in (0,1)$
\begin{align*}
z_{E,4i+2b+c}(5t+2+s)&\ge 1-(y_{A,3}(5t+2+s)-y_{A,3}(5t+2))\\
&\ge 1-s \\
&>0.
\end{align*}
Therefore variables $y_{E,4i+2b+c}$ remain passive. Now consider $i=i^t$. In this case we have
$z_{E,4i^t+2b+c}(5t+2)=2-b-c+(2b-1)\delta(C^t_1=0)+(2c-1)\delta(C^t_2=0)$.
Case by case inspection shows that $-b+(2b-1)\delta(C^t_1=0)=-1$ when $b=\min(C^t_1,1)$ and $\ge 0$ otherwise.
Similarly $-c+(2c-1)\delta(C^t_2=0)=-1$ when $c=\min(C^t_2,1)$. Therefore $y_{E,4i^t+2b+c}$ remains
passive unless $b=\min(C^t_1,1)$ and  $c=\min(C^t_2,1)$.
Before we turn to $y_{E,4i^t+2b+c}$ when $b=\min(C^t_1,1), c=\min(C^t_2,1)$,
we need to analyze $y_{F,*}$.

For these variables we recall from Lemmas~\ref{lemma:5t 5t+1 positive counters},\ref{lemma:5t+1 5t+2 positive counters}
that $z_{F,4i+2b+c}(5t+2)=4+\Upsilon(i,b,c)\ge 1$ for all $i$. Therefore, for $s\in (0,1)$
\begin{align*}
z_{F,4i+2b+c}(5t+2+s)\ge 1-(y_{A,3}(5t+2+s)-y_{A,3}(5t+2))=1-s>0.
\end{align*}
We conclude that $y_{F,*}$ remain passive.

Let us return to $y_{E,4i^t+2b+c}$ when $b=\min(C^t_1,1), c=\min(C^t_2,1)$. Observe that in this case $\Upsilon(i,b,c)=-3$. Therefore
\begin{align*}
z_{E,4i^t+2b+c}(5t+2+s)&= 3+\Upsilon(i,b,c)-s+y_{E,4i^t+2b+c}(5t+2+s)-y_{E,4i^t+2b+c}(5t+2)\\
&=-s+y_{E,4i^t+2b+c}(5t+2+s)-y_{E,4i^t+2b+c}(5t+2).
\end{align*}
Therefore $y_{E,4i^t+2b+c}$ is active in the interval $(5t+2,5t+3)$.

It is now easy to compute the state at time $5t+3$. Let $b^t=\min(C^t_1,1), c^t=\min(C^t_2,1)$.
We have from the definition of $BE$ sub-matrix and from the state of the system at time $5t+2$
that for all $s\in (0,1)$ and $i\ne i^{t+1}$
\begin{align*}
z_{B,i}(5t+2+s)=y_{E,4i^t+2b^t+c^t}(5t+2+s)-y_{E,4i^t+3}(5t+2)=s
\end{align*}
and $z_{B,i^{t+1}}(5t+2+s)=0$. Evaluating at $s=1$, we obtain the claim for $z_{B,*}(5t+3)$.
We have for $i=1,2$
\begin{align*}
z_{C,i}(5t+2+s)&=C^t_i+(\Delta_1+1)(y_{E,4i^t+2b^t+c^t}(5t+2+s)-y_{E,4i^t+2b^t+c^t}(5t+2))\\
&=C^t_i+(\Delta_i+1)s.
\end{align*}
Evaluating at $s=1$, we obtain $z_{C,i}(5t+3)=C^t_i+\Delta_i+1$.

Variables $z_{D,*}$ remain unchanged.

Now we analyze $z_{E,*}$. We have for $4i+2b+c\ne 4i^t+2b^t+c^t$
\begin{align*}
z_{E,4i+2b+c}(5t+2+s)&=3+\Upsilon(i,b,c)-s.
\end{align*}
and $z_{E,4i^t+2b^t+c^t}(5t+2+s)=0$. Recalling that $\Upsilon(i^t,b^t,c^t)=-3$, we may combine two cases as
\begin{align*}
z_{E,4i+2b+c}(5t+3)=2+\Upsilon(i,b,c)+\mb{1}\{(i,b,c)=(i^t,b^t,c^t)\}.
\end{align*}

Finally, for $4i+2b+c\ne 4i^t+2b^t+c^t$ we have
\begin{align*}
z_{F,4i+2b+c}(5t+2+s)=4+\Upsilon(i,b,c)-s,
\end{align*}
and
\begin{align*}
z_{F,4i^t+2b^t+c^t}(5t+2+s)&=4+\Upsilon(i^t,b^t,c^t)-s+s\\
&=1.
\end{align*}
Again we may combine the cases by saying
\begin{align*}
z_{F,4i+2b+c}(5t+3)=3+\Upsilon(i,b,c)+\mb{1}\{(i,b,c)=(i^t,b^t,c^t)\}.
\end{align*}
This completes the proof of the lemma.
\end{proof}

\subsection{Interval $[5t+3,5t+4]$}

\begin{lemma}\label{lemma:5t+3 5t+4 positive counters}
In the time period $(5t+3,5t+4)$ variable $y_{A,4}$ is active. The variables $y_{F,4i+2b+c}$ are passive in the time interval
$(5t+3,5t+3+(3+\Upsilon(i,b,c)+\mb{1}\{(i,b,c)=(i^t,b^t,c^t)\})/4)$ and active during the time interval
$(5t+3+(3+\Upsilon(i,b,c)+\mb{1}\{(i,b,c)=(i^t,b^t,c^t)\})/4,5t+4)$.
All the remaining variables are passive in $(5t+3,5t+4)$.
The state $z(5t+4)$ is as follows.  $z_{B,i^{t+1}}(5t+4)=0$ and $z_{B,i}(5t+4)=1$ for $i\ne i^{t+1}$,
$z_{C,i}(5t+4)=C^t_i+\Delta_i, i=1,2$, $z_{D,i}(5t+3)=0,~ i=1,2$,  $z_{E,*}(5t+4)=3$, and $z_{F,*}(5t+4)=0$.
Namely, in vector form

\begin{align*}
& z_B(5t+4)=\left(
        \begin{array}{c}
          1 \\
          1 \\
          \vdots \\
          1\\
          0\\
          1\\
          \vdots\\
          1 \\
        \end{array}
      \right)
\qquad z_C(5t+4)=\left(
        \begin{array}{c}
          C^t_1+\Delta_1 \\
          C^t_2+\Delta_1 \\
        \end{array}
      \right)
\qquad z_D(5t+4)=\left(
        \begin{array}{c}
          0 \\
          0 \\
        \end{array}
      \right)
\\
& z_E(5t+4)=\left(
        \begin{array}{c}
          \vdots \\
          3 \\
          \vdots \\
        \end{array}
      \right)
\qquad z_F(5t+4)=\left(
        \begin{array}{c}
          \begin{array}{c}
          \vdots \\
          0\\
          \vdots \\
        \end{array}
        \end{array}
      \right)
      &
\end{align*}

\end{lemma}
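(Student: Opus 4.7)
The plan is to propose the active/passive pattern described in the statement as a candidate Skorokhod solution, identify the rates explicitly, and verify both \eqref{eq:z=x+Ry} and the non-idling constraint \eqref{eq:non-idling} on $[5t+3, 5t+4]$, with the resulting state $z(5t+4)$ matching the displayed vectors. The starting state $z(5t+3)$ is supplied by the vector form in Lemma~\ref{lemma:5t+2 5t+3 positive counters}, in particular $z_{E,4i+2b+c}(5t+3) = 2+\Upsilon(i,b,c)+\mb{1}\{(i,b,c)=(i^t,b^t,c^t)\}$ and $z_{F,4i+2b+c}(5t+3) = 3+\Upsilon(i,b,c)+\mb{1}\{(i,b,c)=(i^t,b^t,c^t)\}$, while Lemma~\ref{lemma:xA dynamics} identifies $y_{A,4}$ as the unique $A$-variable active on $(5t+3,5t+4)$, at unit rate.

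First I would dispose of $y_B, y_C, y_D$. In the $B$-rows the only negative entry on an active $A$-column would be $BA_{\cdot,1}$, but $y_{A,1}$ is passive; since $BE\ge 0$ and (as the subsequent analysis will confirm) $y_{E,*}$ is also passive, $z_B$ stays constant at its starting value, consistent with $y_B$ passive. In the $C$-rows, only $CA_{\cdot,4}=-1$ is relevant; starting from $z_{C,i}(5t+3)=C^t_i+\Delta_i+1\ge 1$, the coordinate $z_{C,i}$ decreases linearly to $C^t_i+\Delta_i\ge 0$ at $5t+4$ and stays strictly positive on the open interval, so $y_{C,*}$ remains passive. For $y_D$, the only negative entries $DA_{\cdot,2}$ involve the passive $y_{A,2}$, so $z_D$ stays at $0$ and $y_D$ may remain passive.

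The heart of the lemma is the coupled evolution of $z_{E,4i+2b+c}$ and $z_{F,4i+2b+c}$ under the single active variable $y_{A,4}$. With $EA_{\cdot,4}=0$, $FA_{\cdot,4}=-4$, and every other contributing $y$-variable passive, the $E$- and $F$-rows couple only through the four diagonal $I$-blocks, so $y_{E,4i+2b+c}$ and $y_{F,4i+2b+c}$ each enter both coordinates with coefficient $+1$. My proposed solution keeps $y_{E,*}$ passive throughout and makes $y_{F,4i+2b+c}$ passive on $[5t+3,5t+3+s^*]$ and active at rate $4$ on $[5t+3+s^*,5t+4]$, where $s^* = (3+\Upsilon(i,b,c)+\mb{1}\{(i,b,c)=(i^t,b^t,c^t)\})/4$. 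Under this ansatz, $z_{F,4i+2b+c}$ decreases linearly at rate $4$ from $3+\Upsilon+\mb{1}$, hits $0$ exactly at $5t+3+s^*$, and is then held at $0$ because the rate-$4$ increase of $y_{F,*}$ offsets the coefficient $-4$ of $y_{A,4}$; meanwhile $z_{E,4i+2b+c}$ stays constant on $[5t+3,5t+3+s^*]$ and rises at rate $4$ on $[5t+3+s^*,5t+4]$, reaching $(2+\Upsilon+\mb{1})+4(1-s^*) = 3$ at $5t+4$. Non-idling holds: $y_{E,*}$ never increases, and $y_{F,4i+2b+c}$ increases precisely on the sub-interval on which $z_{F,4i+2b+c}=0$.

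The main obstacle is the rate-$4$ activity of $y_{F,*}$, the unique non-unit rate in the entire argument: it is forced by the coefficient $FA_{\cdot,4}=-4$, which is itself designed so that the compensating entry $FA_{\cdot,5}=+4$ can later reset $z_F$ to the base value $4$ during $[5t+4,5t+5]$. A secondary subtlety is that $z_{E,4i+2b+c}(5t+3)$ can equal $0$ (for instance at $(i,b,c)=(i^t,b^t,c^t)$), so \eqref{eq:non-idling} alone does not rule out $y_{E,*}$ being active; one must instead invoke the fact that under the proposed ansatz $z_{E,*}$ is non-decreasing on $[5t+3,5t+4]$, so setting $y_{E,*}=0$ yields a valid Skorokhod path. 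Once this $E, F$-analysis is in place, reading off $z_A, z_B, z_C, z_D$ at $5t+4$ is routine and completes the induction step.
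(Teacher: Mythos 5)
Your computation of the rates and of the state at $5t+4$ matches the paper's exactly, including the non-unit activity rate $4$ of $y_{F,\cdot}$ and the switching time $s^* = (3+\Upsilon(i,b,c)+\mb{1}\{(i,b,c)=(i^t,b^t,c^t)\})/4$. But the overall framing --- ``propose the active/passive pattern as a \emph{candidate} Skorokhod solution \ldots and verify'' --- shows only the \emph{existence} of a solution with the claimed pattern, whereas the lemma (and Theorem~\ref{theorem:counter-to-Skorokhod} built on it) must hold for \emph{every} solution of the Skorokhod problem. Recall that for a completely-$\mathcal{S}$ matrix the Skorokhod map need not be pathwise unique, and the stability definition is that \emph{every} solution is attracted to the origin; so in the halting direction of Proposition~\ref{prop:stability} one cannot get away with exhibiting one well-behaved path.

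You notice the exact spot where this matters: at $(i,b,c)=(i^t,b^t,c^t)$ one has $z_{E,4i+2b+c}(5t+3)=0$, so positivity alone cannot rule out $y_{E,\cdot}$ increasing. But your resolution --- observe that $z_{E,\cdot}$ is non-decreasing under the ansatz, ``so setting $y_{E,\cdot}=0$ yields a valid Skorokhod path'' --- is again an existence statement. The fix is a short non-idling argument rather than an appeal to the ansatz: with $y_{A,4}$ the only active $A$-variable, $EA_{*,4}=0$, and $y_{B,*},y_{C,*}$ already shown passive, the $E$-rows of (\ref{eq:z=x+Ry}) give, for any solution,
\begin{align*}
z_{E,j}(5t+3+s) \;=\; z_{E,j}(5t+3) \;+\; \bigl(y_{E,j}(5t+3+s)-y_{E,j}(5t+3)\bigr) \;+\; \bigl(y_{F,j}(5t+3+s)-y_{F,j}(5t+3)\bigr) \;\ge\; \Delta y_{E,j}(s),
\end{align*}
so $0=\int_0^1 z_{E,j}\,dy_{E,j}\ge \int_0^1 \Delta y_{E,j}\,dy_{E,j} = \tfrac12\bigl(\Delta y_{E,j}(1)\bigr)^2$, forcing $y_{E,j}$ passive. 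The same one-line integral also handles $y_{B,i^{t+1}}$ (whose $z$-coordinate is likewise $0$ at $5t+3$), which your argument quietly leans on via ``(as the subsequent analysis will confirm) $y_{E,*}$ is also passive.'' Once $y_{E,*}=0$ is pinned down for every solution, the $F$-row is a genuine one-dimensional reflection with linearly decreasing input $z_{F,j}(5t+3)-4s$, whose unique solution is precisely your $y_{F,j}$ active at rate $4$ on $(5t+3+s^*,5t+4)$ and passive before; the rest of your read-off then goes through as a uniqueness argument, and the proof is complete.
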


\begin{proof}
Variables $y_{B,*}$ are passive since $BA_{*,4}=0$ and the entries of $BE$ are non-negative. For $y_{C,*}$ variables we have
by non-negativity of $CE$ that for $s\in (0,1)$
\begin{align*}
z_{C,i}(5t+3+s)\ge z_{C,i}(5t+3)-(y_{A,4}(5t+3+s)-y_{A,4}(5t+3))=z_{C,i}(5t+3)-s>0,
\end{align*}
since by Lemma~\ref{lemma:5t+2 5t+3 positive counters} we have $z_{C,i}(5t+3)=C^t_i+\Delta_i+1\ge 1$
(recall that by feasibility of the Counter Machine we have $C^t_i+\Delta_i\ge 0$). Thus $y_{C,*}$ remain passive.
Variables $y_{D,*}$ remain passive since $DA_{i,4}=0$ and all other entries in rows of $R$ corresponding to $D$ are non-negative.
Variables $y_{E,*}$ remain passive since $EA_{*,4}=0$, variables $y_{B,*}$ and $y_{C,*}$ are passive and all other entries in rows of $R$
corresponding to $E$ are non-negative.

Finally, we analyze $y_{F,*}$ variables. Applying Lemma~\ref{lemma:5t+2 5t+3 positive counters}, and the fact that $y_{B,*},y_{C,*},y_{D,*}$ and
$y_{E,*}$ variables are passive,
we have for $s\in (0,1)$
\begin{align*}
z_{F,4i+2b+c}(5t+3+s)&= z_{F,4i+2b+c}(5t+3)-4(y_{A,4}(5t+3+s)-y_{A,4}(5t+3))\\
&+(y_{F,4i+2b+c}(5t+3+s)-y_{F,4i+2b+c}(5t+3))\\
&=3+\Upsilon(i,b,c)+\mb{1}\{(i,b,c)=(i^t,b^t,c^t)\}\\
&-4s+(y_{F,4i+2b+c}(5t+3+s)-y_{F,4i+2b+c}(5t+3))
\end{align*}
It is easy to verify directly that $\Upsilon(i,b,c)\le 0$ and in the case $(i,b,c)=(i^t,b^t,c^t)$, additionally $\Upsilon(i,b,c)=-3$.
Therefore $3+\Upsilon(i,b,c)+\mb{1}\{(i,b,c)=(i^t,b^t,c^t)\}\le  3$. It follows that
\begin{align*}
3+\Upsilon(i,b,c)+\mb{1}\{(i,b,c)=(i^t,b^t,c^t)\}-4s
\end{align*}
remains positive for $s<(3+\Upsilon(i,b,c)+\mb{1}\{(i,b,c)=(i^t,b^t,c^t)\})/4.$
Starting with  $s=(3+\Upsilon+\mb{1}\{(i,b,c)=(i^t,b^t,c^t)\})/4$,
variables $y_{F,4i+2b+c}$ will start increasing at the rate $4$, while $z_{F,4i+2b+c}(5t+3+s)$ remains zero. This completes the analysis of active/passive variables.

We now compute the state at time $5t+4$. Variables $z_{B,*}$ do not change over the time interval $[5t+3,5t+4]$.
We have for $s\in (0,1)$
\begin{align*}
z_{C,i}(5t+3+s)=z_{C,i}(5t+3)-(y_{A,4}(5t+3+s)-y_{A,4}(5t+3))=z_{C,i}(5t+3)-s.
\end{align*}
We obtain the claimed result for $s=1$. We check directly that variables $z_{D,*}$ do not change. For $z_{E,*}$ variables we obtain
\begin{align*}
z_{E,4i+2b+c}(5t+3+s)&= z_{E,4i+2b+c}(5t+3)+(y_{F,4i+2b+c}(5t+3+s)-y_{F,4i+2b+c}(5t+3)) \\
&=2+\Upsilon(i,b,c)+\mb{1}\{(i,b,c)=(i^t,b^t,c^t)\}\\
&+4(s-(3+\Upsilon(i,b,c)+\mb{1}\{(i,b,c)=(i^t,b^t,c^t)\})/4)^+.
\end{align*}
Then for $s=1$ we obtain $z_{E,4i+2b+c}(5t+4)=3$ as claimed. Finally, we have already verified that $z_{F,*}(5t+4)=0$.
This completes the proof of the lemma.
\end{proof}

\subsection{Interval $[5t+4,5t+5]$}

\begin{lemma}\label{lemma:5t+4 5t+5 positive counters}
In the time period $(5t+4,5t+5)$ variable $y_{A,5}$ is active and  all other variables are passive.
The state $z(5t+5)$ is as claimed in  Theorem~\ref{theorem:counter-to-Skorokhod}.
\end{lemma}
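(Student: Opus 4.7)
}
The plan is to proceed exactly as in the previous interval lemmas: use the starting state $z(5t+4)$ given by Lemma~\ref{lemma:5t+3 5t+4 positive counters} (plus the $A$-values from Lemma~\ref{lemma:xA dynamics}), then verify group by group that no $y$-variable outside of $y_{A,5}$ can be active, and finally read off $z(5t+5)$. The crucial observation driving everything is that column $5$ of $R$ is extremely sparse: by (\ref{eq:AA}) it hits only the $A$-rows through $AA_{*,5}=(0,1,1,2,1)^T$, and it hits the $F$-rows through $FA_{*,5}=4$, with $BA_{*,5}=CA_{*,5}=DA_{*,5}=EA_{*,5}=0$. Hence $y_{A,5}$, which by Lemma~\ref{lemma:xA dynamics} is the unique active $A$-variable on $(5t+4,5t+5)$ and runs at unit rate, influences only $z_A$ and $z_F$.

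First I would dispatch the $B,C,D,E$ blocks. For $y_{B,*}$: all negative entries in $B$-rows lie in $BA_{*,1}=-1$ and $y_{A,1}$ is passive, while $BE\ge 0$; for $i\ne i^{t+1}$ one has $z_{B,i}(5t+4)=1>0$, and for $i=i^{t+1}$ the constant $0$ is maintained with no push needed since the drift on this coordinate vanishes. The same template handles $y_{C,*}$, $y_{D,*}$, $y_{E,*}$: the only negative entries in their row blocks are attached to columns whose $A$-variables are passive (by Lemma~\ref{lemma:xA dynamics}) or to $B,C,D,E$-variables that have just been shown passive; combined with the strictly positive starting values $z_{C,i}(5t+4)=C_i^{t+1}\ge 0$, $z_{E,*}(5t+4)=3$, and the vanishing drift on $D$, each coordinate stays non-negative without pushing, so by (\ref{eq:non-idling}) those $y$'s remain passive on $(5t+4,5t+5)$.

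Next I would handle $y_{F,*}$, which is the only interesting block since $z_{F,*}(5t+4)=0$. Because $y_{A,5}$ runs at unit rate and $FA_{*,5}=4$, while every other negative contribution to a $F$-row is attached to a now-passive variable ($y_{A,1},y_{A,3},y_{A,4}$ passive by Lemma~\ref{lemma:xA dynamics}, and $y_{B,*},y_{C,*}$ passive by the previous paragraph), one gets for $s\in(0,1)$
\begin{align*}
z_{F,4i+2b+c}(5t+4+s)=4s+\bigl(y_{F,4i+2b+c}(5t+4+s)-y_{F,4i+2b+c}(5t+4)\bigr),
\end{align*}
which is strictly positive for $s>0$; by (\ref{eq:non-idling}) this forces $y_{F,*}$ passive on $(5t+4,5t+5)$.

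Finally, with only $y_{A,5}$ active at unit rate, evaluation at $s=1$ gives $z_A(5t+5)=(0,1,1,1,0)^T$ (the first vector in (\ref{eq:x01}), matching Lemma~\ref{lemma:xA dynamics}), while $z_B,z_C,z_D,z_E$ are unchanged from their values at $5t+4$ (since column $5$ is zero on those rows and every other relevant $y$ is passive), and $z_F(5t+5)=0+4\cdot 1=4$. Comparing against Theorem~\ref{theorem:counter-to-Skorokhod} with $t$ replaced by $t+1$, and using $C^{t+1}_i=C^t_i+\Delta_i$ from Lemma~\ref{lemma:5t+3 5t+4 positive counters}, we recover exactly the asserted state and complete the induction step. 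No step looks like a real obstacle here; the only thing to watch is not to accidentally invoke an ``active'' variable to stabilize $z_{B,i^{t+1}}$ or $z_{D,*}$ at $0$, which is avoided by noting that the raw drift on those coordinates is already $0$ and no negative reflection term acts on them during this interval.
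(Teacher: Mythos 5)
Your proof is correct and takes essentially the same route as the paper: check that $y_{B,*},\ldots,y_{F,*}$ are passive in order, observe that column $5$ of $R$ only touches the $A$ and $F$ rows, and compute $z_{F,\cdot}(5t+4+s)=4s$ so that $z_F(5t+5)=4$ while $z_B,z_C,z_D,z_E$ are unchanged. The paper's version is much terser (it simply asserts the group-by-group passivity as ``straightforward to verify'' and then writes the single displayed identity for $z_F$), but the underlying argument is identical, and your extra care about why $z_{B,i^{t+1}}$ and $z_{D,*}$ can sit at $0$ without invoking a push is exactly the right thing to watch.
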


\begin{proof}
It is straightforward to verify starting from $y_{B,*}$ till $y_{F,*}$, in this order that all of these variables remain passive. Thus
$y_{A,5}$ is the only active variable. This variable impacts only variables $z_{F,*}$. Specifically, for $s\in (0,1)$
\begin{align*}
z_{F,4i+2b+c}(5t+4+s)=z_{F,4i+2b+c}(5t+4)+4(y_{A,5}(5t+4+s)-y_{A,5}(5t+4))=4s,
\end{align*}
where Lemma~\ref{lemma:5t+3 5t+4 positive counters} was used to assert $z_{F,4i+2b+c}(5t+4)=0$. Fixing $s=1$,
we obtain the claim.
\end{proof}

Lemmas~\ref{lemma:5t 5t+1 positive counters}--\ref{lemma:5t+4 5t+5 positive counters}
are combined to complete the proof
of Theorem~\ref{theorem:counter-to-Skorokhod}.
\end{proof}

\section{Modification of a Skorokhod problem and stability}\label{section:wrapup}
In this section we modify the Skorokhod problem in such a way that the Counter Machine enters a halting state $i^*$
if and only if the Skorokhod problem initialized as described in the beginning of Section~\ref{section:dynamics}
is stable. Additionally we show that the $R$ matrix corresponding to the modified problem is completely-$\mathcal{S}$.
We then use Theorem~\ref{theorem:counter_undecidable2} to complete the proof of our main result, Theorem~\ref{theorem:mainresult}.

Thus consider the Skorokhod problem constructed in the previous section. Suppose that $i^*$ is the halting state.
Recall by Theorem~\ref{theorem:counter_undecidable2} that we may assume without the loss of generality that if the halting state $i^*$ is reached at some
time $T$, then the values of the counters at this time are given as $C^T_1=C^T_2=1$.
We now modify the Skorokhod problem as follows. We do not change the vector $\theta$.
The matrix $R$ is modified as follows. Set $AB_{k,i^*}=-1$ for $k=3,4,5$ (in particular the matrix $AB$ is no longer zero).
Set $EB_{4i+2b+c,i^*}=-3+b+c$ for all $i=1,\ldots,m$ and $b,c=0,1$ (it used to be $-1$ for $i=i^*$ and zero for $i\ne i^*$).
Finally, let $FB_{4i+2b+c,i^*}=-4+b+c$ for all $i=1,\ldots,m$ and $b,c=0,1$. All the remaining entries remain the same.
For convenience we denote the modified matrix by $R$ again.

\begin{prop}\label{prop:S-completeness}
The matrix $R$ is completely-$\mathcal{S}$.
\end{prop}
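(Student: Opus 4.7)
The plan is to prove Proposition~\ref{prop:S-completeness} by exhibiting a single nonnegative vector $w \in \R^d$ whose restriction to any principal submatrix $R_{JJ}$ is a valid $\mathcal{S}$-witness. The key reduction is the following observation: if for every row $i$ one has the strict inequality $s(i) := R_{ii} w_i + \sum_{j \ne i,\, R_{ij} < 0} R_{ij} w_j > 0$, where the sum ranges over \emph{all} negative entries of row $i$ of the full matrix $R$, then for every $J \ni i$ the restricted row sum satisfies $(R_{JJ}\, w|_J)_i \geq s(i) > 0$. Indeed, restricting the sum of non-positive terms $R_{ij} w_j$ to columns in $J$ only makes the sum less negative, while the positive off-diagonal terms (discarded in the definition of $s(i)$) can only help. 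This reduces the task to a per-row check for a single vector $w$, with no case analysis on $J$ required.

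I would then choose the magnitudes of $w$ after cataloguing the negative entries of each row of $R$: column $A_1$ appears with negative coefficient in rows $B, C, E, F$; column $A_2$ in row $D$; column $A_3$ in rows $E, F$; column $A_4$ in rows $C, F$; column $B_i$ (for $i \ne i^*$) in rows $E_{4i+2b+c}, F_{4i+2b+c}$; column $B_{i^*}$ in rows $A_k$ for $k \in \{3,4,5\}$ (from the modification) and in all $E, F$ rows with coefficient $-3+b+c$ or $-4+b+c$; and columns $C_1, C_2$ in the $E, F$ rows with coefficient $-1$ when $b=0$ or $c=0$ respectively. Take $w_{A,1} = w_{A,2} = 1$, $w_{A,3} = w_{A,4} = w_{A,5} = 3$, $w_{B,i} = 2$ for all $i$, $w_{C,k} = 5$, $w_{D,k} = 2$, and $w_{E,k} = w_{F,k} = 40$. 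The per-row verifications $s(i) > 0$ reduce to: $w_{A,k} > w_{B,i^*}$ for $k \in \{3,4,5\}$ (namely $3 > 2$); $w_B > w_{A,1}$ ($2 > 1$); $w_C > w_{A,1} + w_{A,4}$ ($5 > 4$); $w_D > w_{A,2}$ ($2 > 1$); and for the $E$ and $F$ rows, enumeration over $(b,c) \in \{0,1\}^2$ and $i = i^*$ versus $i \ne i^*$ shows the tightest bounds are $w_E > 3 + 2 + 6 + 5 + 5 = 21$ and $w_F > 3 + 12 + 2 + 8 + 5 + 5 = 35$, both satisfied by $40$.

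The main obstacle I foresee concerns the pair of identity blocks $EF$ and $FE$: a priori one might try to exploit the positive off-diagonal contributions $+w_{F,k}$ to row $E_k$ and $+w_{E,k}$ to row $F_k$ to balance the negatives, but for an arbitrary principal submatrix $J$ one cannot guarantee $F_k \in J$ whenever $E_k \in J$. The reduction to $s(i) > 0$ sidesteps this concern by demanding that the diagonal $w_{E,k}$ alone dominate the worst-case negative contribution to the row, which forces $w_E$ and $w_F$ to be chosen large (e.g.\ $40$). Once this reduction is in place, the remaining work is a finite, routine enumeration over the nine row-types and at most four sub-cases on $(b,c)$, which I would carry out directly.
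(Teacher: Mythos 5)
Your proof is correct and takes essentially the same approach as the paper: both construct a single nonnegative vector and verify, row by row, that the diagonal contribution dominates the worst-case total of negative off-diagonal contributions, which automatically handles every principal submatrix. Your explicit $s(i)$-reduction is a clean articulation of what the paper does via indicator functions $\mb{1}\{\cdot\in I\}$, and your witness (with $w_{E}=w_{F}=40$ instead of the paper's $25, 38$) satisfies the same row-wise inequalities.
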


\begin{proof}
Consider a vector $v$ defined as follows:
$v_{A,1}=v_{A,2}=1,v_{A,3}=v_{A,4}=v_{A,5}=3,v_{B,i}=2, 1\le i\le m,v_{C,*}=5,v_{D,*}=2,v_{E,*}=25, v_{F,*}=38$.
Consider any principal submatrix $\tilde R$ and let $I$ be the set of indices
in $\tilde R$. We claim that $\tilde R v_I>0$. Namely, we claim that the vector $v$ achieves the required property for \emph{every}
principal submatrix $\tilde R$.
Consider any $(A,k)\in I$ (if any exists). Then
\begin{align*}
\sum_{j\in I}R_{(A,k),j}v_j=\sum_{j\in I,j\ne (A,k)}R_{(A,k),j}v_j+v_{A,k}
\end{align*}
But the only negative component in the row $(A,k)$ is $AB_{k,i^*}=-1$ for $k=3,4,5$. Since $v_{B,i^*}=2$ and $v_{A,k}=3$ for $3\le k\le 5$
we obtain that the expression is at least $3-2>0$.

Consider any $(B,i)\in I$. We have by non-negativity of $BE$
\begin{align*}
\sum_{j\in I}R_{(B,i),j}v_j\ge -v_{A,1}\mb{1}\{(A,1)\in I\}+v_{B,i}\ge -1+2>0.
\end{align*}

Consider any $(C,i)\in I$. We have by non-negativity of $CE$
\begin{align*}
\sum_{j\in I}R_{(C,i),j}v_j\ge -v_{A,1}\mb{1}\{(A,1)\in I\}-v_{A,4}\mb{1}\{(A,4)\in I\}+v_{C,i}\ge -1-3+5>0.
\end{align*}
Similarly for any $(D,i)\in I$
\begin{align*}
\sum_{j\in I}R_{(D,i),j}v_j\ge -v_{A,2}\mb{1}\{(A,2)\in I\}+v_{D,i}\ge -1+2>0.
\end{align*}
For any $(E,4i+2b+c)\in I, i\ne i^*$ we have
\begin{align*}
\sum_{j\in I}R_{(E,4i+2b+c),j}v_j&\ge -(b+c)v_{A,1}\mb{1}\{(A,1)\in I\}-v_{A,3}\mb{1}\{(A,3)\in I\}-v_{B,i}\mb{1}\{(B,i)\in I\}\\
&-(3-b-c)v_{B,i^*}\mb{1}\{(B,i^*)\in I\}
+(2b-1)v_{C,1}\mb{1}\{(C,1)\in I\}\\
&+(2c-1)v_{C,2}\mb{1}\{(C,2)\in I\}+v_{E,4i+2b+c}\\
&\ge -2-3-2-3\times 2-5-5+24\\
&>0.
\end{align*}
The case $i=i^*$ is similar, except the term $v_{B,i}\mb{1}\{(B,i)\in I\}$ disappears.
Similarly, for  any $(F,4i+2b+c)\in I$ we have
\begin{align*}
\sum_{j\in I}&R_{(F,4i+2b+c),j}v_j\\
&\ge -(b+c)v_{A,1}\mb{1}\{(A,1)\in I\}-v_{A,3}\mb{1}\{(A,3)\in I\}-4v_{A,4}\mb{1}\{(A,4)\in I\}-v_{B,i}\mb{1}\{(B,i)\in I\}\\
&-(4-b-c)v_{B,i^*}\mb{1}\{(B,i^*)\in I\}
+(2b-1)v_{C,1}\mb{1}\{(C,1)\in I\}\\
&+(2c-1)v_{C,2}\mb{1}\{(C,2)\in I\}+v_{F,4i+2b+c}\\
&\ge -2-3-4\times 3-2-4\times 2-5-5+38\\
&>0.
\end{align*}
\end{proof}

Next we connect the halting property of the Counter Machine with stability of the Skorokhod problem.
Introduce a new notation. Given a state $i$ and counter values $C_1,C_2$, the corresponding
state of the Skorokhod problem described in the beginning of Section~\ref{section:dynamics} is
denoted by $z^{i,C_1,C_2}$. In particular $z^{i,C_1,C_2}_{B,i}=0, z^{i,C_1,C_2}_{B,i'}=1, i'\ne i$,
and $z^{i,C_1,C_2}_{C,i}=C_i, ~i=1,2$.
\begin{prop}\label{prop:stability}
Consider a Counter Machine starting in configuration $(i^0,z^0_1,z^0_2)$ and the corresponding
Skorokhod problem $(\theta,R)$ initiated
in state $z(0)=z^{i^0,z^0_1,z^0_2}$. Then the Counter Machine enters state $i^*$ if and only if the Skorokhod problem $(z(0),\theta,R)$
is stable. Specifically, if the Counter Machine enters state $i^*$ in time $T$, then $z(5T+1)=0$, and if the Counter Machine
never reaches state $i^*$ then $z(t)\ne 0$ for all $t$.
\end{prop}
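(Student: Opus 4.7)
The plan rests on the elementary observation that the three modifications to $R$ (the entries $AB_{k,i^*}$, $EB_{4i+2b+c,i^*}$, $FB_{4i+2b+c,i^*}$) all sit in the single column of $R$ indexed by $(B,i^*)$. Hence they enter the dynamics only through $y_{B,i^*}$, and by the proofs of Lemmas \ref{lemma:5t 5t+1 positive counters}--\ref{lemma:5t+4 5t+5 positive counters} the only $y_B$ coordinate that can be active during any interval $(5t,5t+5)$ is $y_{B,i^t}$. Thus the modifications take effect only on those intervals $(5t,5t+1)$ with $i^t=i^*$; on all other times the trajectory of the modified Skorokhod problem coincides with the one described in Theorem \ref{theorem:counter-to-Skorokhod}.

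If the Counter Machine never enters $i^*$, then the modifications never fire, so the trajectory is exactly the one furnished by Theorem \ref{theorem:counter-to-Skorokhod}. For every $t\geq 0$ one has $z_{B,i^*}(5t)=1$ (since $i^t\neq i^*$), whence $z(5t)\neq 0$. Between the times $5t$ and $5t+5$, Lemma \ref{lemma:xA dynamics} guarantees that at least one of the coordinates $z_{A,k}$ equals $1$ throughout (indeed two of them are identically $1$ on each unit subinterval), so $z(t)\neq 0$ for all $t\geq 0$ and the Skorokhod problem is not stable.

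Suppose instead that the Counter Machine reaches $i^*$ at time $T$, so by Theorem \ref{theorem:counter_undecidable2} also $C_1^T=C_2^T=1$. The observation above gives that $z(5T)$ is the canonical encoding of $(i^*,1,1)$, and I plan to redo the proof of Lemma \ref{lemma:5t 5t+1 positive counters} on $(5T,5T+1)$ with the modified $R$. The variable $y_{B,i^*}$ is again forced active at unit rate (as $z_{B,i^*}(5T)=0$ and $BA_{i^*,1}=-1$), but now its contribution combines with $y_{A,1}$ through the new entries: for $k=3,4$ one has $AA_{k,1}+AB_{k,i^*}=0$, yielding $z_{A,k}(5T+s)=1-s$; for $k=5$, $AA_{5,1}+AB_{5,i^*}=1$, giving $z_{A,5}\equiv 0$ against the drift; and the sums $EA_{4i+2b+c,1}+EB_{4i+2b+c,i^*}=-3$ and $FA_{4i+2b+c,1}+FB_{4i+2b+c,i^*}=-4$ yield $z_{E,4i+2b+c}(5T+s)=3-3s$ and $z_{F,4i+2b+c}(5T+s)=4-4s$. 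Simultaneously $z_{B,i}$ for $i\neq i^*$ and $z_{C,i}$ decrease from $1$ to $0$ as in the unmodified analysis, while $z_D\equiv 0$. All these expressions are non-negative on $[0,1]$ and vanish together at $s=1$, which verifies both the Skorokhod feasibility of the proposed trajectory and the claim $z(5T+1)=0$.

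To conclude stability it remains to show $z(t)\to 0$ on the tail $t\geq 5T+1$. My plan is to exhibit a non-negative $w$ with $Rw=-\theta$, then use it to define the continuation $y(t)=y(5T+1)+w(t-5T-1)$, which makes $z(t)\equiv 0$ a valid Skorokhod solution; combined with the uniqueness arguments pinning down active/passive patterns via non-idling (analogous to those in Section \ref{section:dynamics}), this gives stability. I would seek $w$ with $w_A=(1/5)\mathbf{1}$, so that $AA\,w_A=\mathbf{1}$ forces $w_{B,i^*}=0$; the $C,D$-block equations then reduce to $w_{C,i}+w_{D,i}=1/5$ together with $CE_{i,*}w_E=1/5$ for $i=1,2$; the $B$-block yields $(BE\,w_E)_{i^*}=1/5$ and $(BE\,w_E)_j\leq 1/5$ for $j\neq i^*$; and the $E$- and $F$-block equations, which coincide modulo the identities linking the differences of their matrices, leave a single compatible non-negative constraint on $w_E+w_F$ whose feasibility hinges on the numerics of the modified entries. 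The main obstacle is precisely this feasibility check together with the uniqueness argument for the continuation from $z=0$; once these are carried out, stability follows, and combined with the previous direction the Skorokhod problem $(z(0),\theta,R)$ is stable if and only if the Counter Machine halts, so by Theorem \ref{theorem:counter_undecidable2} stability is undecidable, proving Theorem \ref{theorem:mainresult}.
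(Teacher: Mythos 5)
Your proposal follows the same two main directions as the paper's proof of Proposition~\ref{prop:stability}, and your bookkeeping for the modified column is correct: the entries $AA_{k,1}+AB_{k,i^*}$ equal $0,0,1$ for $k=3,4,5$, the sums $EA_{\cdot,1}+EB_{\cdot,i^*}=-3$ and $FA_{\cdot,1}+FB_{\cdot,i^*}=-4$, and with $y_{A,1}$ and $y_{B,i^*}$ both active at unit rate these give exactly the linear descents $z_{E}=3-3s$, $z_{F}=4-4s$, $z_{A,k}=1-s$ ($k=2,3,4$), $z_{A,5}\equiv 0$, $z_{B,i}=1-s$ ($i\neq i^*$), $z_{C,i}=1-s$ and $z_D\equiv 0$, which all hit zero at $s=1$. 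This matches the paper's Lemma~\ref{lemma:wrapup} (which, incidentally, has a typo at the end where it writes $z(5T+1)=1$).

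Two things deserve flagging. First, in the direction where $i^*$ is never reached, your argument ``the modifications never fire, so the trajectory is exactly the one from Theorem~\ref{theorem:counter-to-Skorokhod}'' is slightly circular as stated: Lemmas~\ref{lemma:5t 5t+1 positive counters}--\ref{lemma:5t+4 5t+5 positive counters} are proved for the \emph{unmodified} $R$, and you cannot invoke them to conclude that $y_{B,i^*}$ stays passive in the \emph{modified} system without an intermediate step. The paper's proof handles this by an explicit induction: assuming the state at $5t$ matches the unmodified one, it shows, interval by interval and being especially careful on $(5t+1,5t+2)$, that $z_{B,i^*}$ remains positive so $y_{B,i^*}$ remains passive, hence the modified column never enters the dynamics over $[5t,5t+5]$ and the next state also matches. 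Your argument has the right intuition but needs this induction to be honest.

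Second, your extra step---finding $w\ge 0$ with $Rw=-\theta$ to continue at $z\equiv 0$ for $t>5T+1$, plus a uniqueness argument there---is \emph{not} in the paper: the paper's proof ends at $z(5T+1)=0$ and declares the proposition proved, implicitly treating the origin as absorbing. You are right that, under Definition~1 as literally stated (\emph{every} solution must satisfy $\lim_{t\to\infty}z(t)=0$), the fact that $z$ hits zero once does not by itself finish the job: one must also argue that no Skorokhod continuation from $z=0$ wanders away. You have identified a genuine point the paper glosses over. However, you leave exactly this step as ``the main obstacle'' without carrying out the feasibility and uniqueness verification, so as written your proposal is incomplete at the one place where it tries to go beyond the paper. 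Either complete that computation or argue directly (e.g., that at $z=0$ the only compatible pushing direction solves the LCP $v=\theta+Rw$, $w,v\ge 0$, $w^Tv=0$ with $v=0$) before you can claim stability in the paper's sense.
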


\begin{proof}
Assume first that the halting state $i^*$ is never reached, and let us show that
$(z(0),\theta,R)$ is not stable.
For this goal let us  show that the dynamics of $y(t),z(t)$ evolve exactly in the same way
as in the case before augmentation. The proof is by induction in $t$. The base case is covered
by the initialization of our Skorokhod problem. Suppose the assertion holds for all $t'\le 5t$. In particular, $z(5t)=z^{i^t,C^t_1,C^t_2}$.
We now show that it holds over time
$[5t,5t+5]$. In particular, $z(5t+5)=z^{i^{t+1},C^{t+1}_1,C^{t+1}_2}$.
Consider first the interval $[5t,5t+1]$. Observe that $y_{A,1}$ increases at most the unit rate over the interval $(5t,5t+1)$,
since all the entries in the row $(A,1)$ are non-negative.
Since $i^*\ne i^t$ then $z_{B,i^*}(5t)=1$. Also $z_{B,i^*}$ can decrease at the rate at most the rate of increase of $y_{A,1}$ (as all other entries
in the row $(B,i^*)$ of $R$ are non-negative. It follows that $z_{B,i^*}$ remains positive in the interval $(5t,5t+1)$ and therefore,
$y_{B,i^*}$ is passive. Since the only change in the matrix $R$ has to do with the column variable $(B,i^*)$, then repeating the
argument of Lemma~\ref{lemma:xA dynamics} and Lemma~\ref{lemma:5t 5t+1 positive counters} we obtain that the dynamics of $y(t),z(t)$
is the same as before augmentation. In particular the state $z(5t+1)$ is described in Lemma~\ref{lemma:5t 5t+1 positive counters}.

Now consider interval $(5t+1,5t+2)$. First consider $(5t+1,5t+3/2)$. Recall that $y_{A,1}$ increases at most the unit rate.
It then follows that $y_{B,i^*}$  also can increase
at most the unit rate since $BA_{i^*,1}=-1$ and all the other entries in the row $(B,i^*)$ are non-negative.
Let us show that over this interval $y_{A,k}$ are passive for $k=3,4,5$.
Their value at time $5t+1$ is $1$. They can decrease at most the rate $-2$, with $-1$ due to $\theta_{A,k}=-1$ and another $-1$ due
to the increase rate of $y_{B,i^*}$ being bounded by one. Thus $y_{A,k}, k=3,4,5$ are passive in $(5t+1,5t+3/2)$. This further implies
that $y_{A,2}$ increases at the unit rate over this interval (to compensate $\theta_{A,2}=-1$). But then as in the proof of Lemma~\ref{lemma:xA dynamics}
we conclude that $y_{A,1}$ remains passive. This further implies that $y_{B,i^*}$ remains passive. We conclude that over the time
interval $(5t+1,5t+3/2)$ the system evolves exactly in the same way as the system before modification,
namely as described in Lemma~\ref{lemma:5t+1 5t+2 positive counters}. In particular $z_{A,k}=1$ for $k=3,4,5$. We now repeat the same argument as
for the interval $(5t+1,5t+3/2)$ to argue that over the time interval $(5t+3/2,5t+2)$ the dynamics is the same as before the modification.

Now consider interval $(5t+2,5t+3)$. Since $z_{A,1}(5t+2)=1$, then $y_{A,1}$ remains passive in this time interval. Then $y_{B,i^*}$ remains
passive in this time interval as well. Then the dynamics of $y$ and $z$ is the same as before the augmentation and is described in
Lemma~\ref{lemma:5t+2 5t+3 positive counters}. In particular $z_{B,i^*}(5t+3)=1$ (since $i^*\ne i^t$). Recall from
Lemmas~\ref{lemma:5t+3 5t+4 positive counters} and \ref{lemma:5t+4 5t+5 positive counters} that $z_{B,i}(s)=1$ for $s=(5t+3,5t+5)$ and $i\ne i^{t+1}$.
By assumption $i^*\ne i^{t+1}$. Therefore this applies to $z_{B,i^*}$ as well and in particular the dynamics over $(5t+3,5t+5)$
is exactly as before the augmentation. This completes the proof Proposition~\ref{prop:stability} for the case when the state $i^*$ is never
reached.

Now assume that the halting state $i^*$ is reached in time $t=T$.
We recall from Theorem~\ref{theorem:counter_undecidable2} that then at time $T$ both counters are equal to unity. Repeating the previous argument we have
that $z(5T)=z^{i^*,1,1}$. The claim of Proposition~\ref{prop:stability} follows immediately from the following lemma.

\begin{lemma}\label{lemma:wrapup}
Over the time interval $(5T,5T+1)$ the variables  $y_{A,1}$ and $y_{B,i^*}$ are active at the unit rate. All the remaining variables are passive.
Moreover $z(5T+1)=0$.
\end{lemma}

\begin{proof}
We have
\begin{align*}
z_{A,2}(5T+s)\ge z_{A,2}(5T)-s=1-s.
\end{align*}
It follows that $y_{A,2}$ is a passive variable in $(5T,5T+1)$.

For variables $y_{B,*}, z_{B,*}$ we have for $i\ne i^*$ and by non-negativity of $BE$ sub-matrix that
\begin{align*}
z_{B,i}(5T+s)&\ge z_{B,i}(5T)-(y_{A,1}(5T+s)-y_{A,1}(5T))=1-(y_{A,1}(5T+s)-y_{A,1}(5T))
\end{align*}
Recall that $y_{A,1}$ increases at most the unit rate. This means $z_{B,i}(5T+s)>0$ for all $s\in (0,1)$ and therefore $y_{B,i}$ remains passive.
We will show later that $y_{B,i^*}$ is active. For now let us obtain a partial relation on this variable. We have
\begin{align*}
z_{B,i^*}(5T+s)&= -(y_{A,1}(5T+s)-y_{A,1}(5T))+y_{B,i^*}(5T+s)-y_{B,i^*}(5T)\\
&+\sum_jBE_{i^*,j}(y_{E,j}(5T+s)-y_{E,j}(5T)).
\end{align*}
From the non-negativity of $BE$ submatrix and applying constraint (\ref{eq:non-idling}) we obtain that
\begin{align}\label{eq:A1<i*}
y_{B,i^*}(5T+s)-y_{B,i^*}(5T)\le y_{A,1}(5T+s)-y_{A,1}(5T),
\end{align}
for every $s\in (0,1)$ (otherwise there exist an interval over which $y_{B,i^*}$ is active while $z_{B,i^*}$ is positive).
In particular,  $y_{B,i^*}$ increases at most the unit rate since so does $y_{A,1}$.

Now consider variables $y_{A,3},z_{A,3}$. We have applying (\ref{eq:A1<i*})
\begin{align*}
z_{A,3}(5T+s)\ge 1-s+y_{A,1}(5T+s)-y_{A,1}(5T)-(y_{B,i^*}(5T+s)-y_{B,i^*}(5T))\ge 1-s>0
\end{align*}
for $s\in (0,1)$.
It follows that $y_{A,3}$ is passive in $(5T,5T+1)$. For a similar reason $y_{A,4}$ remains passive as well.
We will return to $y_{A,5}$ later.

Recall that $z^{T}_1=z^{T}_2=1$. This means $z_{C,k}(5T)=1, k=1,2$. Using a similar argument as for $y_{B,i}, i\ne i^*$,
non-negativity of $CE$ and the fact that $y_{A,4}$ is passive, we establish that $y_{C,k}$ remain passive. Variables $y_{D,k}, k=1,2$ remain passive
since $y_{A,2}$ is passive as we established, and all other entries in the $D$ rows of $R$ are non-negative.

For $E$ variables, recall that $y_{A,3},y_{B,i}, i\ne i^*$ and $y_{C,*}$ are passive. Then
\begin{align*}
z_{E,4i+2b+c}(5T+s)&=3-(b+c)(y_{A,1}(5T+s)-y_{A,1}(5T))-(3-b-c)(y_{B,i^*}(5T+s)-y_{B,i^*}(5T))\\
&+y_{E,4i+2b+c}(5T+s)-y_{E,4i+2b+c}(5T)+y_{F,4i+2b+c}(5T+s)-y_{F,4i+2b+c}(5T)
\end{align*}
Recall that $y_{A,1}$ and $y_{B,i^*}$
increase at most the unit rate.  We see
that
\begin{align*}
z_{E,4i+2b+c}\ge 3-(b+c)s-(3-b-c)s=3-3s
\end{align*}
and therefore remains positive in $(5T,5T+s)$. Thus $y_{E,*}$ variables are passive.
For a similar reason and since in addition $y_{A,4}$ is passive, as we have established, variables $y_{F,*}$ are passive as well.

Let let us consider variable $y_{B,i^*}$. In light of the fact that $y_{E,*}$ are passive, we have
\begin{align*}
z_{B,i^*}(5T+s)=-(y_{A,1}(5T+s)-y_{A,1}(5T))+(y_{B,i^*}(5T+s)-y_{B,i^*}(5T)).
\end{align*}
This by constraint (\ref{eq:non-idling}) implies
\begin{align}\label{eq:A1=i*}
y_{B,i^*}(5T+s)-y_{B,i^*}(5T)=y_{A,1}(5T+s)-y_{A,1}(5T)
\end{align}
for all $s\in (0,1)$. Now let us return to $y_{A,1}$.
Since $y_{A,2},y_{A,3},y_{A,4}$ are passive then
\begin{align*}
z_{A,1}(5T+s)&=-s+y_{A,1}(5T+s)-y_{A,1}(5T),
\end{align*}
which implies that $y_{A,1}$ is active at unit rate and the same holds for $y_{B,i^*}$ by (\ref{eq:A1=i*}).

Finally, turning to $y_{A,5}$ we have
\begin{align*}
z_{A,5}(5T+s)&=-s+2(y_{A,1}(5T+s)-y_{A,1}(5T))+y_{A,5}(5T+s)-y_{A,5}(5T)\\
&-(y_{B,i^*}(5T+s)-y_{B,i^*}(5T))\\
&=-s+2s+y_{A,5}(5T+s)-y_{A,5}(5T)-s\\
&=y_{A,5}(5T+s)-y_{A,5}(5T).
\end{align*}
This implies that $y_{A,5}$ is passive in the interval $(5T,5T+1)$.

We have established that $y_{A,1}$ and $y_{B,i^*}$ are the only active variables and they increase at the unit rate.
With this in mind it is straightforward to verify that  $z(5T+1)=1$. This completes the proof of the lemma.
\end{proof}

This completes the proof of Proposition~\ref{prop:stability}.
\end{proof}

\section*{Acknowledgements}
We are delighted to acknowledge many enlightening discussions with Maury Bramson.

\bibliographystyle{amsalpha}

\begin{thebibliography}{BBK{\etalchar{+}}01}

\bibitem[AB89]{BernardKharroubi89}
A.~El~Kharroubi A.~Bernard, \emph{R\'eflexions (ou r\'egulations) de processus
  dans le premier ``orthant'' de {${\bf R}^n$}}, C. R. Acad. Sci. Paris S\'er.
  I Math. \textbf{309} (1989), 371--375.

\bibitem[AB91]{BernardKharroubi91}
\bysame, \emph{R\'egulations d\'eterministes et stochastiques dans le premier
  ``orthant'' de {${\bf R}^n$}}, Stochastics Stochastics Rep. \textbf{34}
  (1991), 149--167.

\bibitem[AEK00]{Kharroubi2000}
A.~Yaacoubi A.~El~Kharroubi, A. Ben~Tahar, \emph{Sur la r\'ecurrence positive
  du mouvement brownien r\'eflechi dans l'orthant positif de {$\mathbf R^n$}},
  Stochastics Stochastics Rep. \textbf{68} (2000), 229--253.


\bibitem[AEK02]{Kharroubi2002}
A.~Yaacoubi~Abdelhak A.~El~Kharroubi, A. Ben~Tahar, \emph{On the stability of
  the linear {S}korohod problem in an orthant}, Math. Methods Oper. Res.
  \textbf{56} (2002), 243--258.

\bibitem[BBK{\etalchar{+}}01]{blondel}
V.~D. Blondel, O.~Bournez, P.~Koiran, C.~H. Papadimitriou, and J.~N.
  Tsitsiklis, \emph{Deciding stability and mortality of piecewise affine
  systems}, Theoretical Computer Science \textbf{225} (2001), no.~1-2,
  687--696.

\bibitem[BDH]{BramsonDaiHarrisonSRBM}
M.~Bramson, J.~G. Dai, and J.~M. Harrison, \emph{Positive recurrence of
  reflecting {B}rownian motion in three dimensions}, Preprint.

\bibitem[BL09]{BudhirajaLee}
A.~Budhiraja and C.~Lee, \emph{Stationary distribution convergence for
  generalized {J}ackson networks in heavy traffic}, Math. Oper. Res.
  \textbf{34} (2009), 45--56.

\bibitem[Bra]{BramsonSRBMinstabilityFluid}
M.~Bramson, \emph{A positive recurrent reflecting {B}rownian motion with
  divergent fluid path}, Preprint.

\bibitem[BT00a]{BlondelTsitsiklisMatrixPair}
V.~D. Blondel and J.~N. Tsitsiklis, \emph{The boundedness of all products of a
  pair of matrices is undecidable}, Systems and control letters \textbf{41}
  (2000), no.~2, 135--140.

\bibitem[BT00b]{blondel_survey}
\bysame, \emph{A survey of computational complexity results in systems and
  control}, Automatica \textbf{36} (2000), no.~9, 1249--1274.

\bibitem[CY01]{ChenYaoBook}
H.~Chen and D.~Yao, \emph{Fundamentals of queueing networks: Performance,
  asymptotics and optimization}, Springer-Verlag, 2001.

\bibitem[DW94]{dupuis_williams}
P.~Dupuis and R.~J. Williams, \emph{Lyapunov functions for semimartingale
  reflecting {B}rownian motions}, Annals of Probability \textbf{22} (1994),
  680--702.

\bibitem[Gam02]{gamarnik_decidability}
D.~Gamarnik, \emph{On deciding stability of constrained homogeneous random
  walks and queueing systems}, Mathematics of Operations Research \textbf{27}
  (2002), no.~2, 272--293.

\bibitem[Gam07]{gamarnik_decidability_LD}
\bysame, \emph{Computing stationary probability distribution and large
  deviations rates for constrained homogeneous random walks. {T}he
  undecidability results}, Mathematics of Operations Research \textbf{27}
  (2007), no.~2, 272--293.

\bibitem[GK07]{GamarnikKatz}
D.~Gamarnik and D.~Katz, \emph{Correlation decay and deterministic {FPTAS} for
  counting list-colorings of a graph}, Proceedings of 18th ACM-SIAM Symposium
  on Discrete Algorithms (SODA), 2007.

\bibitem[GKR09]{GamarnikRogozhnikovDecidability}
D.~Gamarnik and D.~Katz-Rogozhnikov, \emph{On deciding stability of queueing
  networks under priority scheduling policy}, Annals of Applied Probability
  \textbf{19} (2009), 2008--2037.

\bibitem[GS10]{Goodman-Strauss}
C.~Goodman-Strauss, \emph{Can't decide? undecide!}, Notices of the American
  Mathematical Society \textbf{57} (2010), 343--356.

\bibitem[GZ06]{GamarnikZeevi}
D.~Gamarnik and A.~Zeevi, \emph{Validity of heavy traffic steady-state
  approximations in open queueing networks}, Ann. Appl. Prob. \textbf{16}
  (2006), no.~1, 56--90.

\bibitem[Har90]{harrison}
J.~M. Harrison, \emph{{B}rownian motion and stochastic flow systems}, Krieger
  Publishing Company, 1990.

\bibitem[Hoo66]{hooper}
P.~Hooper, \emph{The undecidability of the {Turing} machine immortality
  problem}, The Journal of Symbolic Logic \textbf{2} (1966), 219--234.

\bibitem[HU69]{hopcroft}
J.~Hopcroft and J.~Ullman, \emph{Formal languages and their relation to
  automata}, Addison-Wesley. Boston, MA, 1969.

\bibitem[Sip97]{sipser}
M.~Sipser, \emph{Introduction to the theory of computability}, PWS Publishing
  Company, 1997.

\bibitem[TW93]{TaylorWilliams}
L.~M. Taylor and R.~J. Williams, \emph{Existence and uniqueness of
  semimartingale reflecting {B}rownian motions in an orthant}, Probab. Theory
  Related Fields \textbf{96} (1993), 283--317.

\bibitem[Wil95]{WilliamsSurvey95}
R.~J. Williams, \emph{Semimartingale reflecting {B}rownian motions in the
  orthant}, Stochastic Networks, IMA Volumes in Mathematics and Its
  Applications, Volume 71, eds. F. P. Kelly and R. J. Williams.
  Springer-Verlag, New York, 1995, pp.~125--137.

\end{thebibliography}

\newcommand{\etalchar}[1]{$^{#1}$}
\providecommand{\bysame}{\leavevmode\hbox to3em{\hrulefill}\thinspace}
\providecommand{\MR}{\relax\ifhmode\unskip\space\fi MR }
\providecommand{\MRhref}[2]{%
  \href{http://www.ams.org/mathscinet-getitem?mr=#1}{#2}
}
\providecommand{\href}[2]{#2}

\end{document}